\newtheorem{theorem}{Theorem}[section]
\newtheorem{prop}{Proposition}[section]
\newtheorem{definition}[theorem]{Definition}
\newtheorem{example}[theorem]{Example}
\newtheorem{remark}[theorem]{Remark}
\newtheorem{fact}[theorem]{Fact}
\numberwithin{equation}{section}
\begin{document}
	\title{Modular Surfaces in Lorentz-Minkowski 3-Space: Curvature and Applications}

	\author{Siddharth Panigrahi, Subham Paul, Rahul Kumar Singh, Priyank Vasu}
	
	\date{}

	\maketitle
	
	\vspace{-1cm}
	\begin{center}
	\textit{Department of Mathematics, Indian Institute of Technology Patna,}\\
	\textit{Bihta, Patna-801106, Bihar, India}
	\end{center}

	\vspace{-0.5cm}
	\begin{center}
    \texttt{siddharth\_2121ma12@iitp.ac.in}\\
	\texttt{subham\_2021ma25@iitp.ac.in} \\
  \texttt{rahulks@iitp.ac.in}\\
        \texttt{priyank\_2121ma16@iitp.ac.in}
	\end{center}

	\begin{abstract} 
	In this paper, we study the relation of the sign of the Gaussian and mean curvature of modular surfaces in Lorentz-Minkowski $3$-space to the zeroes of the associated complex analytic functions and its derivatives. Further, we completely classify zero Gaussian curvature modular surfaces. Next we show non-existence of non-planar maximal modular surfaces, characterize CMC modular surfaces, analyze asymptotic behaviour of Gaussian curvature of complete modular graphs and the Hessian of their height functions and lastly as application, demonstrate how modular surfaces can be realised as integral surfaces of some conformal field theories and non-linear sigma models. 
	\end{abstract}

    \vspace{1em}
    \noindent\textbf{Mathematics Subject Classification (2020):} 53B30, 53A10, 70S05, 81T40.
	
    \vspace{1em}
	\noindent\textbf{Keywords:} Modular surfaces,  Zero mean curvature surfaces, Gaussian curvature, Constant Mean Curvature, Conformal Field Theories, Non-Linear Sigma Models.

\section{Introduction}

A modular surface in $\mathbb R^3$ is defined as the graph of the modulus of a complex analytic function. 
The concept of modular surfaces was very likely first introduced by E. Maillet (see \cite{maillet} and \cite{reimerdes1911niveau}).
The differential geometric properties of 
modular surfaces  in the Euclidean space $\mathbb E^3:=(\mathbb R^3, dx^2+dy^2+dt^2)$, where $(x,y,t)$ are the cartesian coordinates of $\mathbb R^3$, were first studied by J. Zaat in \cite{jaat} (see \cite{Kreyszig} also).
 In \cite{ullrich1951amountareas}, E. Ullrich classified these surfaces based on the sign of their Gaussian curvature function. To the best of our knowledge, modular surfaces have not been explored much since then. 
 
 Modular surfaces are known to be useful in studying some important meromorphic functions, such as Riemann's zeta functions and Euler's gamma function, etc. They also find its applications in many different areas of mathematics, such as complex dynamical systems, Laplace and complex Fourier transform, to name a few (see \cite{Wegert} and references therein).


In this paper, we study the modular surfaces in the Lorentz-Minkowski space $\mathbb E_1^3:=(\mathbb R^3, dx^2+dy^2-dt^2)$. A surface in $\mathbb E_1^3$ can have different causal characters: timelike, spacelike, lightlike, and they can be of mixed type too (see \cite{Rafael}), and as because naturally, degeneracy arises in these surfaces, studying the geometric properties of such surfaces is somewhat complicated compared to the surfaces in $\mathbb E^3$. 
\par Firstly, we have shown that the sign of Gaussian and mean curvatures at a point and in its neighbourhood depends on the order of zeroes of the generating analytic function and its derivatives. These results have been compiled in three tables (see Table \ref{tab:my_label1}, Table \ref{tab:my_label2} and Table \ref{newtable_new}). Along with this, we have also given a complete classification of zero Gaussian curvature modular surfaces and zero mean curvature (ZMC) modular surfaces in $\mathbb{E}_1^3$. In particular, we have shown that \textit{the only examples of ZMC modular surfaces in $\mathbb E_1^3$ are planar sections} (see Theorem \ref{mainthm}).  A similar argument can be used to prove the non-existence of ZMC (minimal) modular surfaces in $\mathbb E^3$. Lastly using the log-harmonicity of modular height functions, we give a characterization of \enquote{CMC modular surfaces}, upto a change of parametrization (see Theorem \ref{thm5.1}), give an asymptotic analysis of the Gaussian curvature of convex modular surfaces (see Propositions \ref{prop5.1} and \ref{prop5.2}), the asymptotic vanishing of the Hessian of their height functions (see Proposition \ref{prop5.3}) and actualize modular surfaces as local integral surfaces of Liouville and Massless Free Scalar Conformal Field Theories (see Subsections \ref{cft} and \ref{massless}) and Non-Linear Sigma Models (see Subsection \ref{sigma}). Moreover, these solutions are \enquote{analytically easily computable} since the Euler-Lagrange equations in these cases reduce to first order partial differential equations.

\par
This article is arranged as follows. In Section \ref{P}, we have formally introduced the notion of modular surface. We have also shown how the causality of such surfaces in $\mathbb E_1^3$ depends on the first derivative of the generating complex analytic function. Next in Section \ref{CMS}, we obtain the formula for Gaussian curvature of a modular surface in $\mathbb E_1^3$ in terms of its generating function $F$ 
 and its derivatives $F', F''$. This enables us to discuss the sign of Gaussian curvature of a modular surface and its relation to the order of zeroes of $F, F^{\prime}, F^{\prime\prime}$( see Theorem \ref{thimp} and the propositions following it). Also, we completely classify the analytic functions associated with modular surfaces for which the Gaussian curvature vanishes identically (see Theorem \ref{gausszero}). This also identifies a subclass of analytic functions. In Section \ref{meansection}, we examine the mean curvature of modular surfaces and show the non-existence of ZMC modular surfaces in $\mathbb E^3$ and $\mathbb E_1^3$ (see Theorem \ref{mainthm} and the remark following it). Finally, in Section \ref{application}, we use the log-harmonicity of the modular height function to give the aforementioned applications.





\section{Preliminaries}\label{P} 
  Throughout this article, an element $(x,y)$ of $\mathbb{R}^2$ is identified with the complex number $z=x+iy$. The term \textit{domain} stands for an open connected subset of $\mathbb{C}$. We begin by defining the \textit{modular surface (modular graph)}.
\begin{definition}
   A surface in $\mathbb R^3$ parametrized by $M(x,y)= (x, y, h(x,y))$ is said to be modular if $h(x,y)= |F(z)|$, where $z=x+iy\in D$ and $F$ is an analytic function defined on a domain $D$ in $\mathbb C$. Here, $|\;\;|$ denotes the complex modulus.
\end{definition}
\noindent
\textbf{Note:} The function $h$ is called the height function of the modular surface $M(x,y)$ defined on $D$ and $F$ is called its generating function. 

\noindent
Throughout this paper, we will take $F$ as the square of another analytic function $f$ unless otherwise specified, i.e., $F=f^2$ on a domain $D$. This is required as we want our $h$ to be always smooth; for example, $|z|=\sqrt{x^2+y^2}$ is not smooth at the origin. So the following will be the notational convention,
\begin{equation}
    \label{mark1}
    F= u+iv = (a+ib)^2,
\end{equation}
where $f=a+ib$. Clearly, $u, v, a, b$ are all harmonic functions defined on $D$. If $F$ happens to be non-zero at a point, then around that point, the analytic square root of $F$ exists, which ensures the smoothness of $h$ in a neighbourhood of that point. 
\begin{remark}
    If $F, G$ are two non-zero analytic functions on $D$ such that $|F|=|G|$, then from the \textit{Identity Theorem} we conclude that $F= kG$ on $D$, where $k\in\mathbb{C}$ with $|k|=1$. In this case, $F$ and $G$ generate the same modular surface. Let us denote this relation by $F\sim G$. Then, ${\sim}$ will be an equivalence relation on the class of all analytic functions defined on $D$. This shows that given a modular surface defined on $D$ with height function $h$, there exists a unique equivalence class of analytic functions on $D$ such that $h=|F|$, where $F$ is any representative of that class.
\end{remark} Before we proceed any further, we would like to introduce an easily verifiable fact that will be helpful for our discussion.
\begin{fact}(\cite{Weiser})
\label{fact2}
    If $h$ is the height function of a modular surface, then $h(h_{xx}+h_{yy})=h_x^2+ h_y^2$.
\end{fact}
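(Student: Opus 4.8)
The plan is to recognize that the claimed identity is precisely the assertion that $\log h$ is harmonic, and to exploit the log-harmonicity of the modulus of an analytic function. First I would reformulate the statement: away from the zero set of $h$, the quotient rule gives
\[
\Delta(\log h) = (\log h)_{xx} + (\log h)_{yy} = \frac{h(h_{xx}+h_{yy}) - (h_x^2+h_y^2)}{h^2},
\]
so the identity $h(h_{xx}+h_{yy}) = h_x^2 + h_y^2$ holds exactly when $\log h$ is harmonic. Thus the whole claim reduces to establishing the log-harmonicity of $h$.

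Second, I would prove that log-harmonicity. On any disc where $F \neq 0$, a single-valued analytic branch of $\log F$ exists, so $\log h = \log|F| = \mathrm{Re}(\log F)$. Being the real part of an analytic function, $\log|F|$ is harmonic there, which is exactly $\Delta(\log h) = 0$ and hence the desired identity on the open set $\{F \neq 0\}$.

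The main obstacle is the behaviour at the zeros of $F$, where $\log h \to -\infty$ and the argument above breaks down. Here the standing convention $F = f^2$ rescues the situation: $h = |F| = |f|^2 = a^2 + b^2$ is globally smooth, so I would verify the identity directly there. Using $h = a^2+b^2$ with $a,b$ harmonic and satisfying the Cauchy--Riemann equations $a_x = b_y$, $a_y = -b_x$, one finds that the second-derivative terms $a(a_{xx}+a_{yy})$ and $b(b_{xx}+b_{yy})$ vanish by harmonicity, leaving $h_{xx}+h_{yy} = 4(a_x^2+a_y^2)$, while the cross terms cancel to give $h_x^2 + h_y^2 = 4(a^2+b^2)(a_x^2+a_y^2) = 4h(a_x^2+a_y^2)$. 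Comparing the two expressions yields the identity at every point, including the zeros of $F$.

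Alternatively, once log-harmonicity is established on the dense open set $\{F\neq 0\}$, the identity $h(h_{xx}+h_{yy}) - (h_x^2+h_y^2) = 0$ is a relation between continuous functions on $D$ (since $h$ is smooth), so it extends to all of $D$ by continuity without any separate computation at the zeros. I expect the conceptual content to lie entirely in the log-harmonicity observation; the only genuine care needed is the handling of the zero set, which the direct computation or the continuity argument dispatches cleanly.
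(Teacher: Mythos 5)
Your proof is correct and takes essentially the same route as the paper: the paper records this fact by citation to Weiser as ``easily verifiable,'' and the verification it relies on is exactly your direct expansion of $h=a^2+b^2$ using harmonicity of $a,b$ and the Cauchy--Riemann equations (compare equation \eqref{eq1}, where $h(h_{xx}+h_{yy})=h_x^2+h_y^2=4(a^2+b^2)(a_x^2+b_x^2)$, which agrees with your $4h(a_x^2+a_y^2)$ since $b_x=-a_y$ and $b_y=a_x$). Your log-harmonicity reformulation is likewise already present in the paper as Fact \ref{fact1}, and your continuity alternative is sound, needing only the immediate remark that the zeros of a nonzero analytic $F$ are isolated (the case $F\equiv 0$ being trivial).
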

By a classical result from complex analysis (see \cite{Ahlfors}, Page-142), we can conclude that if $h(>0)$ is a height function for some modular surface defined on a simply connected domain $D$ with generating function $F$, then $h=|F(z)|=|e^{H(z)}|$, where $H$ is analytic on $D$. Hence, we have the following fact:
\begin{fact}
\label{fact1}
    A positive function \( h > 0 \) is the height function of a modular surface over a simply connected domain \( D \) if and only if \( \log h \) is harmonic on \( D \). In this case, \( h \) is called a log-harmonic function. Since \( \log h \) is harmonic, it follows that \( h \) is subharmonic on \( D \).

\end{fact}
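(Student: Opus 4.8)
The plan is to establish the biconditional in two directions and then deduce subharmonicity as a consequence. Throughout I would set $\phi := \log h$, which is well defined and smooth precisely because $h>0$, and work on the simply connected domain $D$. Simple connectivity is the hypothesis that powers both implications, so I would keep careful track of exactly where it is invoked.

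For the forward implication, suppose $h$ is the height function of a modular surface, i.e. $h=|F|$ for some analytic $F$ on $D$. Since $h>0$, the function $F$ is nowhere zero, so I would invoke the classical result already quoted above (Ahlfors, Page-142): on the simply connected domain $D$, the non-vanishing analytic $F$ admits an analytic logarithm, giving $F=e^{H}$ with $H$ analytic. Then $h=|e^{H}|=e^{\operatorname{Re} H}$, whence $\log h=\operatorname{Re} H$. Being the real part of an analytic function, $\operatorname{Re} H$ is harmonic, so $\phi=\log h$ is harmonic. This direction is essentially a restatement of the quoted fact.

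For the converse, suppose $\phi=\log h$ is harmonic on $D$. Here I would use that on a simply connected domain every harmonic function possesses a harmonic conjugate: there exists a harmonic $\psi$ so that $H:=\phi+i\psi$ is analytic. Setting $F:=e^{H}$ produces an analytic function with $|F|=e^{\operatorname{Re} H}=e^{\phi}=h$, exhibiting $h$ as the modulus of an analytic function and hence as the height function of a modular surface. This closes the equivalence.

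For subharmonicity, with $h=e^{\phi}$ and $\phi$ harmonic I would compute the Laplacian directly: differentiating $h_x=h\,\phi_x$ and $h_y=h\,\phi_y$ yields $h_{xx}+h_{yy}=h(\phi_x^2+\phi_y^2)+h(\phi_{xx}+\phi_{yy})$, and since $\phi$ is harmonic and $h>0$ this gives $h_{xx}+h_{yy}=h(\phi_x^2+\phi_y^2)\ge 0$, so $h$ is subharmonic. Equivalently, one can read this straight off Fact \ref{fact2}, since $h(h_{xx}+h_{yy})=h_x^2+h_y^2\ge 0$ forces $\Delta h\ge 0$. The computation is routine; the only real care is in the two topological appeals to simple connectivity --- for the analytic logarithm in the forward direction and the harmonic conjugate in the converse --- so I expect no genuine obstacle beyond stating those hypotheses explicitly.
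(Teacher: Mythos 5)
Your proposal is correct and follows the same route the paper takes: the forward direction is exactly the quoted Ahlfors analytic-logarithm fact ($h=|F|=|e^{H}|$ on the simply connected $D$, so $\log h=\operatorname{Re}H$ is harmonic), and the converse via a harmonic conjugate together with $F:=e^{\phi+i\psi}$ is the standard complement the paper leaves implicit. Your subharmonicity computation $\Delta h=h\,|\nabla\log h|^{2}\ge 0$ likewise matches the paper's reading of Fact \ref{fact2}, so there is nothing to add.
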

\noindent
 We observe that if $h$ is the height function of a modular surface, then $h+c$ ($c$ is a real constant) may not be the height function for any modular surface, i.e., if we lift a modular surface by a certain height, it may lose its \enquote{modularity}. For example, $h=e^x=|e^z|$ but $h=e^x+1$ is not the height function of any modular surface.

\noindent

\subsection{Modular surfaces in Lorentz-Minkowski space $\mathbb{E}_1^3$}

 Now, we will recall some basic facts about surfaces in $\mathbb{E}_1^3$.
\begin{definition}
    Let $S$ be a 2-dimensional smooth manifold with or without boundary. An immersion $\Bar{x}: S\rightarrow\mathbb{E}_1^3$ is called spacelike (respectively timelike, lightlike) if for all $p\in S$, the metric $d\Bar{x}_p^*(,)$ is positive definite (respectively a metric with index 1, a degenerate metric), where
    \begin{equation*}
      d\Bar{x}_p^*(u,v)=\langle d\Bar{x}_p(u),\hspace{0.1cm}d\Bar{x}_p(v)\rangle;\forall u,v\in T_p(S).
    \end{equation*}
\end{definition}
The spacelike, timelike or lightlike nature of a surface is called the causal character of the surface. So we see that for the immersion generated by the height function $h$ as $M(x,y)= (x, y, h(x,y));\forall (x,y)\in D$, the matrix of the induced metric with respect to the basis $\{M_x=(1,0,h_x), M_y=(0,1,h_y)\}$ is
\begin{equation*}
   \left(g_{ij}\right)_{i,j=1,2}= \begin{bmatrix}
1-h_x^2 & -h_xh_y \\
-h_xh_y &  1-h_y^2
\end{bmatrix},
\end{equation*} where $g_{11}=\langle M_x,\hspace{0.1cm} M_x\rangle,\hspace{0.1cm} g_{12}=g_{21}=\langle M_x,\hspace{0.1cm} M_y\rangle,\hspace{0.1cm} g_{22}=\langle M_y,\hspace{0.1cm}M_y\rangle$ and the determinant of $\left(g_{ij}\right)_{i,j=1,2}$ is $1-h_x^2-h_y^2=1-|\nabla h|^2$. Thus, the immersion is spacelike if $|\nabla h|^2=h_x^2+h_y^2<1$, timelike if $|\nabla h|^2=h_x^2+h_y^2>1$ and lightlike or degenerate if $|\nabla h|^2=h_x^2+h_y^2=1$.
\par
\noindent
 From Fact \ref{fact2}, we have the following result:
\begin{equation}
\label{eq1}
    h(h_{xx}+h_{yy})=h_x^2+h_y^2=4(a^2+b^2)(a_x^2+b_x^2).
\end{equation}
Therefore from equation \eqref{eq1} we have
\begin{equation}
\label{eq2}
    |\nabla h|^2=h_x^2+h_y^2=4|ff^{\prime}|^2=|F^{\prime}|^2.
\end{equation}
Hence, the immersion is spacelike if $|F^{\prime}|<1$, timelike if $|F^{\prime}|>1$ and lightlike if $|F^{\prime}|=1$. Therefore, in a sufficiently small neighbourhood of a zero of $F=f^2$ (say $z_0$), the causal character of the surface will be spacelike as $|F^{\prime}(z_0)|=|2f(z_0)f^{\prime}(z_0)|=0<1$.\par
Also, the expressions of Gaussian curvature ($K$) and mean curvature ($H$) for the modular surface in $\mathbb E_1^3$ are given as follows:
\begin{equation}
\label{eq3}
    K=-\frac{h_{xx}h_{yy}-h_{xy}^2}{(1-h_x^2-h_y^2)^2},
\end{equation}
\begin{equation}
    \label{eq7}
    H=-\frac{(1-h_y^2)h_{xx}+2h_xh_yh_{xy}+(1-h_x^2)h_{yy}}{2|1-h_x^2-h_y^2|^{3/2}}.
\end{equation}
For more details on the geometry of surfaces in $\mathbb E_1^3$ (see \cite{Rafael}).

\section{Construction of Modular surfaces and Sign of Gaussian Curvature }\label{CMS}
  In this section, we derive the formula for Gaussian curvature of a non-degenerate modular surface (i.e., $|F'|\neq 1$) in terms of $F$ and its complex derivatives, and then using this, we will discuss the behaviour of sign of Gaussian curvature in a neighbourhood of a point.

\subsection{Gaussian curvature}
Recall from the equation \eqref{mark1} $h=|F|=|f^2|=a^2+b^2$. Therefore, we have
\begin{align*}
   h_{xx}h_{yy}-h_{xy}^2&=2[(aa_{xx}+bb_{xx})+(a_x^2+b_x^2)]\cdot2[(a_x^2+b_x^2)-(aa_{xx}+bb_{xx})]-[2(ba_{xx}-ab_{xx})]^2\\
    &=4[(a_x^2+b_x^2)^2-(a^2+b^2)(a_{xx}^2+b_{xx}^2)]\\
    &=4(|f^{\prime}|^4-|ff^{\prime\prime}|^2).
\end{align*}
Using the above expression and equation \eqref{eq2} in \eqref{eq3} we get
\begin{equation}
\label{eq4}
    K=-\frac{4\left(|f^{\prime}|^4-|ff^{\prime\prime}|^2\right)}{\left(1-4|ff^{\prime}|^2\right)^2}.
\end{equation}
Now if we use $|f^{\prime}|=|\frac{F^{\prime}}{2\sqrt{F}}|$ and $|f^{\prime\prime}|=|\frac{2FF^{\prime\prime}-F^{{\prime}^2}}{4F^{3/2}}|$ as $F=f^2$ in equation \eqref{eq4}, we get
\begin{equation}
    \label{eq5}
    K=-\frac{|F^{\prime}|^4-|2FF^{\prime\prime}-F^{{\prime}^2}|^2}{4|F|^2(1-|F^\prime|^2)^2},
\end{equation}
where $F\neq 0$. We can simplify it further to obtain
\begin{equation}
    \label{eq6}
    K=-\frac{|F^{\prime\prime}|^2\left[\operatorname{Re}\left(\frac{F^{{\prime}^2}}{FF^{\prime\prime}}\right)-1\right]}{\left(1-|F^{\prime}|^2\right)^2},
\end{equation}
where $F^{\prime\prime}\neq0$.

\begin{remark}
   If $F=0$ at a point, then from equation \eqref{eq4} we get $K=-4|f^{\prime}|^4$.
\end{remark}

\subsection{Construction of Modular surfaces}
From now on, we will only talk about non-degenerate surfaces. It is evident from the formula of Gaussian curvature in equation \eqref{eq6} that the expression $\frac{F^{{\prime}^2}}{FF^{\prime\prime}}$ plays an important role there, especially in determining the sign of Gaussian curvature. Ullrich in his paper (\cite{ullrich1951amountareas}) has established the relationship between $F$ and $\frac{F^{{\prime}^2}}{FF^{\prime\prime}}$ by solving $F$ in terms of $1-\frac{FF^{\prime\prime}}{F^{{\prime}^2}}$. He used that relation to provide a procedure for constructing modular surfaces in $\mathbb E^3$. We adopt that idea here for constructing modular surfaces in $\mathbb {E}_1^3$ with positive or negative Gaussian curvature everywhere. For that we need to first introduce $\alpha$ and $\beta$ as follows:
\begin{align}
\label{eqa16}
    \alpha&=\frac{F^{{\prime}^2}}{FF^{\prime\prime}},\\
 \label{eqa17}   
     \beta&=1-\frac{1}{\alpha}=\frac{F^{{\prime}^2}-FF^{\prime\prime}}{F^{{\prime}^2}}.
\end{align}
\par
We will assume throughout this section that $\alpha, \beta$ are well-defined in the domain, i.e., $F, F^{\prime}, F^{\prime\prime}$ never vanish in the domain. One can clearly observe from equation \eqref{eqa17} that $\beta$ can never be $1$ in the domain. Now if $t=\frac{1}{2}-\beta$, then $\operatorname{Re}\,(\alpha)=\frac{1+2\operatorname{Re}(t)}{\frac{1}{2}+2\operatorname{Re}(t)+2|t|^2}$. Therefore from  equation \eqref{eq6} we have
\begin{align}
    \label{eqa18}
    \operatorname{Re}\,(\alpha)<1&\iff |t|>\frac{1}{2}\iff K>0,\\
    \label{eqa19}
    \operatorname{Re}\,(\alpha)=1&\iff |t|=\frac{1}{2}\iff K=0,\\
    \label{eqa20}
    \operatorname{Re}\,(\alpha)>1&\iff |t|<\frac{1}{2}\iff K<0.
\end{align}

Let us now consider that our domain $D$ in $\mathbb{C}$ is simply connected and contains the origin. Then, Ullrich has shown that 
\begin{equation}
    \label{eq21}
    F(z)=F(z;k)=F(0)exp\int_0^z\frac{ds}{k+\int_0^s\beta(\tau)d\tau},
\end{equation}
where $k=\frac{F(0)}{F^{\prime}(0)}$.
\par

Also it is easy to prove that if $\beta$ is any analytic function on a simply connected domain $D$ containing the origin such that $\beta$ is never $1$ and $k$ is a complex constant such that $k+\int_0^s\beta(s)ds$ never vanishes on the domain $D$, then $F$ determined by \eqref{eq21} will satisfy the equation \eqref{eqa17}.
\\
Therefore, we have the following theorem characterising all modular surfaces for which $F, F^{\prime}, F^{\prime\prime}$ never vanish on the domain.  
\begin{theorem}(\cite{ullrich1951amountareas})\label{mtcmodular}
    Let $M$ be a modular surface defined on a simply connected domain $D$ containing the origin and induced by the analytic function $F$ such that $F, F^{\prime}, F^{\prime\prime}$ never vanish. Then 
    \begin{equation}
    \label{eq22}
    F(z)=F(0)exp\int_0^z\frac{ds}{k+\int_0^s\beta(\tau)d\tau},
\end{equation}
where $k=\frac{F(0)}{F^{\prime}(0)}$, $\beta=\frac{F^{{\prime}^2}-FF^{\prime\prime}}{F^{{\prime}^2}}$ and conversely.
    \end{theorem}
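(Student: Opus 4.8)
The plan is to reduce the biconditional to a single first-order observation about the auxiliary function $g=F/F'$, after which both directions are integrations. For the forward direction, note that since $F$ and $F'$ never vanish on $D$, the function $g=F/F'$ is analytic and nowhere zero on $D$. I would then differentiate it directly:
\[
g'=\frac{F'\cdot F'-F\cdot F''}{F'^2}=\frac{F'^2-FF''}{F'^2}=\beta .
\]
This is the crux of the whole argument: the quantity $\beta$ in \eqref{eqa17} is exactly the derivative of $F/F'$. Because $D$ is simply connected and contains the origin, integrating from $0$ to $z$ along any path gives a well-defined value $g(z)=g(0)+\int_0^z\beta(\tau)\,d\tau=k+\int_0^z\beta(\tau)\,d\tau$, using $g(0)=F(0)/F'(0)=k$.

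Next I would rewrite $g=F/F'$ as $\dfrac{F'}{F}=\dfrac{1}{g}$, that is
\[
(\log F)'(z)=\frac{1}{\,k+\int_0^z\beta(\tau)\,d\tau\,}.
\]
Here simple-connectivity together with $F\neq 0$ guarantees a well-defined analytic branch of $\log F$, while $g\neq 0$ keeps the right-hand side analytic. Integrating once more from $0$ to $z$ and exponentiating yields
\[
F(z)=F(0)\exp\int_0^z\frac{ds}{\,k+\int_0^s\beta(\tau)\,d\tau\,},
\]
which is precisely \eqref{eq22}.

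For the converse I would simply run the computation backwards. Given an analytic $\beta$ with $\beta\neq 1$ and a constant $k$ such that $g(z):=k+\int_0^z\beta(\tau)\,d\tau$ never vanishes, define $F$ by the stated formula. Then $F$ is a nonvanishing exponential, one computes $F'=F/g$ (so $F'\neq 0$ since $g\neq 0$), and differentiating once more gives $F''=F(1-\beta)/g^2$; because $\beta\neq 1$ this forces $F''\neq 0$, so the nonvanishing hypotheses on $F,F',F''$ hold. Substituting these back into $(F'^2-FF'')/F'^2$ collapses to $1-(1-\beta)=\beta$, confirming \eqref{eqa17}.

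Every calculation here is routine; the only genuine content is the identification $g'=\beta$ and the care needed to justify the two integrations on a simply connected domain (path-independence of the integrals and existence of an analytic logarithm of the nonvanishing $F$). Thus the main obstacle is not any hard estimate but spotting that $F/F'$ is the correct primitive to differentiate, everything else following by integration and a short verification.
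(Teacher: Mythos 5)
Your proof is correct and takes essentially the same route as the paper, which gives no details of its own: it cites Ullrich for the forward formula (describing it as ``solving $F$ in terms of $1-\frac{FF^{\prime\prime}}{F^{\prime 2}}$'') and asserts the converse is easy, and the identity $\left(\frac{F}{F^{\prime}}\right)^{\prime}=\beta$ that you isolate is exactly the observation underlying that derivation. Your two integrations (justified by simple connectivity and the nonvanishing of $F$ and of $g=F/F^{\prime}$) and your converse verification via $F^{\prime}=F/g$ and $F^{\prime\prime}=F(1-\beta)/g^{2}$ correctly supply the steps the paper leaves to the reader.
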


The following example shows how to construct a modular surface in $\mathbb{E}_1^3$ with Gaussian curvature having a particular sign throughout the domain by choosing appropriate $\beta$.
\begin{example}
    Consider $\beta(z)=z+\frac{1}{2}$, defined in the domain $D=\{z\in\mathbb{C}:|z|<\frac{1}{2}\}$ and $k=-1$ in the equation \eqref{eq22}. Then $\beta$ is never $1$ as well as $k+\int_0^z\beta(\tau)d\tau$ never vanishes in $D$. So if we take $F(0)=(-\frac{1}{2})^{\frac{2}{3}}$, then from equation \eqref{eq22} we get $F(z)=\left(\frac{z-1}{z+2}\right)^{\frac{2}{3}}$. This generates a modular surface with everywhere negative Gaussian curvature as $|\beta-\frac{1}{2}|<\frac{1}{2}$ in $D$.

\end{example}
 The next theorem determines all modular surfaces in $\mathbb{E}_1^3$ with identically zero Gaussian curvature.

 \begin{theorem}\label{gausszero}
     Let $M$ be a modular surface in $\mathbb{E}_1^3$ induced by $F$ such that the Gaussian curvature of the surface vanishes identically. Then 
     $$F(z)=exp(mz+n)\; \text{or,}\; (mz+n)^{1+il},\;\text{where}\;m,n\in \mathbb{C}; l\in \mathbb{R}.$$
\end{theorem}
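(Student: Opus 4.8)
The plan is to convert the condition $K\equiv 0$ into a rigidity statement for the analytic quantity $\alpha=\frac{F'^2}{FF''}$ (equivalently $\beta$), show that it forces $\frac{FF''}{F'^2}$ to be a constant, and then integrate the resulting second order ODE. Throughout I write $\mu:=\frac{FF''}{F'^2}=1-\beta$ wherever $F'\neq 0$.

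First I would dispose of the trivial case $F'\equiv 0$: then $F$ is a constant $e^{n}$, which has the form $\exp(mz+n)$ with $m=0$. So assume $F'\not\equiv 0$ and (discarding the degenerate $F\equiv 0$) $F\not\equiv 0$; then the zero sets $\{F=0\}$ and $\{F'=0\}$ are discrete, and $D':=D\setminus\{F'=0\}$ is again a domain. On the dense open subset $D\setminus(\{F=0\}\cup\{F'=0\})$ the curvature formula \eqref{eq5} is valid, and $K=0$ reads $|F'|^4=|2FF''-F'^2|^2$. Dividing by $|F'|^4$ (legitimate since $F'\neq 0$) this becomes $|\Phi|=1$, where $\Phi:=\frac{2FF''}{F'^2}-1$; note $\Phi=2t$ with $t=\tfrac12-\beta$, so this is exactly the equivalence \eqref{eqa19}.

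The crucial step is now to promote this pointwise identity to a global constancy. The function $\Phi$ is analytic on all of $D'$, and $|\Phi|\equiv 1$ there by continuity from the dense subset. Since $D'$ is connected, the maximum modulus principle (an analytic function of constant modulus on a connected open set is constant) forces $\Phi$, and hence $\mu=\tfrac12(\Phi+1)$, to be a constant. By the Identity Theorem the relation $FF''=\mu F'^2$ then holds throughout $D$. This maximum-modulus upgrade, together with the bookkeeping needed to guarantee that the exceptional zeros are only isolated, is where I expect the real content of the argument to sit; the remaining integration is routine.

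Finally I would integrate $FF''=\mu F'^2$. If $\mu=1$ the equation says $(F'/F)'=0$, so $F'/F$ equals a constant $m$ and $F=\exp(mz+n)$. If $\mu\neq 1$, the substitution $G=F^{1-\mu}$ gives $G''=(1-\mu)F^{-\mu-1}(FF''-\mu F'^2)=0$, so $G$ is affine and $F=(mz+n)^{\gamma}$ with $\gamma=\frac{1}{1-\mu}$. To identify the admissible exponents I rewrite the earlier constraint: since $\Phi=2\mu-1=\frac{\gamma-2}{\gamma}$, the condition $|\Phi|=1$ is equivalent to $|\gamma-2|=|\gamma|$, i.e.\ $\operatorname{Re}\gamma=1$, whence $\gamma=1+il$ for some $l\in\mathbb{R}$ and $F=(mz+n)^{1+il}$ (the linear case $l=0$, equivalently $\mu=0$ or $F''\equiv 0$, being included). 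The converse, that each of these functions yields $K\equiv 0$, follows by substituting back into \eqref{eq5}.
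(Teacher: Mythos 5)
Your proposal is correct and follows essentially the same route as the paper: both reduce $K\equiv 0$ to the statement that the analytic quantity $\frac{FF''}{F'^2}$ (your $\mu$, the paper's $1-\beta$, with your $\Phi=1-2\beta$) has constant modulus after normalization, invoke the maximum modulus principle to conclude it is constant, and then integrate the resulting ODE $FF''=\mu F'^2$ to obtain $\exp(mz+n)$ when $\mu=1$ and $(mz+n)^{1+il}$ otherwise, with the exponent condition $\operatorname{Re}\gamma=1$ matching the paper's $\operatorname{Re}\alpha=1$. Your treatment is somewhat more scrupulous than the paper's about the exceptional sets (disposing of $F'\equiv 0$, discreteness of zeros, density-plus-continuity, and the identity-theorem extension to all of $D$), but these are refinements of the same argument rather than a different one.
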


\begin{proof}
    From the expressions of $K$ in equations \eqref{eq5} and \eqref{eq6} we can say that $K\equiv 0$ implies $F^{\prime\prime}\equiv 0$ or $\operatorname{Re}\,(\frac{F^{{\prime}^2}}{FF^{\prime\prime}})-1=0$.\\
    If $F^{\prime\prime}\equiv 0$, then $F(z)=mz+n$ for some complex constants $m,n$.\\
    Now let $\operatorname{Re}\,(\frac{F^{{\prime}^2}}{FF^{\prime\prime}})-1=0$. This implies $\operatorname{Re}\,(\alpha)=1$, where $\alpha$ is defined in the equation \eqref{eqa16} and equivalently from equation \eqref{eqa19}, $|\beta-\frac{1}{2}|=\frac{1}{2}$ on the domain. Therefore, $\beta$ is a constant function, and consequently, $\alpha$ is also so. Hence $\alpha=1+il^{\prime}$, where $l^{\prime}$ is a real constant.
    \\
    \textbf{Case 1:} Let $l^{\prime}=0$. Then $\frac{F^{\prime\prime}}{F^{\prime}}=\frac{F^{\prime}}{F}$ and therefore integrating twice we get $F(z)=exp(mz+n),$ where $m,n$ are complex constants.
    \\

    \textbf{Case 2:} Let $l^{\prime}\neq 0$. Then
  $$  \begin{aligned}
        &&\frac{F^{{\prime}^2}}{FF^{\prime\prime}}&=\alpha\\
        \implies &&F^{-\frac{1}{\alpha}}F^{\prime}&=m^{\prime} \hspace{0.5cm} (m^{\prime} \text{ is a complex constant})
    \end{aligned}$$
     Therefore, by integrating we get $F(z)=(mz+n)^{\frac{\alpha}{\alpha-1}}$, where $m,n$ are complex constants. Also $\frac{\alpha}{\alpha-1}=1+il$ with $l=-\frac{1}{l^{\prime}}$.\\
     Hence, the proposition.
  
\end{proof}

\subsection{Sign of Gaussian curvature}
In the previous subsection, we have discussed the sign of Gaussian curvature of Modular surfaces at a point where $F,\, F^{\prime},\, F^{\prime\prime}$ are all non-zero. Now, let us discuss what happens when one or more of them vanish at a particular point, and $F$ is not necessarily the square of an analytic function. For this discussion in Euclidean space, one may refer to \cite{Weiser}. We will exclude the case $F(z)=az+b$ from our discussion, where $a,b$ are complex constants, i.e., $F^{\prime\prime}\equiv 0$. We will take the origin $z=0$ as the point under consideration for simplified computations. Now suppose $F$ is an analytic function defined around the origin such that $F(0)\neq0,\, F^{\prime}(0)\neq0$ and $F^{\prime\prime}(0)=\dots=F^{n-1}(0)=0$ with $F^{n}(0)\neq0$, where $n\geq3$. It is important to note that in a small deleted neighbourhood of $z=0$, the function $\omega=\frac{F^{{\prime}^2}}{FF^{\prime\prime}}$ is analytic and hence Gaussian curvature can be evaluated using the equation \eqref{eq6}.   Now, the next Theorem throws light on the behaviour of the sign of Gaussian curvature of the modular surface induced by $F$ in a sufficiently small neighbourhood of the origin. 

\begin{theorem}
\label{thimp}
    If $F(0)\neq0,\, F^{\prime}(0)\neq0$ and $F^{\prime\prime}(0)=\dots=F^{n-1}(0)=0$ with $F^{n}(0)\neq0$, where $n\geq3$, then the Modular surface induced by $F$ will have $n-2$ regions of positive Gaussian curvature,  $n-2$ regions of negative Gaussian curvature and $2n-4$ curves of zero Gaussian curvature each emanating from the origin separating all these regions in a deleted neighbourhood of $z=0$, and at $z=0$, the Gaussian curvature vanishes.
\end{theorem}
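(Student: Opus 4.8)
The plan is to reduce the statement to a leading-order angular analysis of the quantity $\alpha=\frac{F'^2}{FF''}$ from \eqref{eqa16}, whose real part controls the sign of $K$ via \eqref{eq6} and \eqref{eqa18}--\eqref{eqa20}: wherever $F''\neq0$, $K$ has the sign of $1-\operatorname{Re}(\alpha)$. First I would record the Taylor data at the origin. Writing $F(z)=a_0+a_1z+a_nz^n+O(z^{n+1})$ with $a_0=F(0)\neq0$, $a_1=F'(0)\neq0$, $a_n=\frac{F^{(n)}(0)}{n!}\neq0$ and $a_2=\dots=a_{n-1}=0$, one reads off that $F,F'$ are nonvanishing near $0$ while $F''$ has a zero of exact order $n-2$. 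Hence $\alpha$ has a pole of exact order $m:=n-2\ge1$ at the origin, and the function $\Phi(z):=z^{m}\alpha(z)=\frac{z^{m}F'(z)^2}{F(z)F''(z)}$ extends analytically across $0$ with $\Phi(0)=C:=\frac{a_1^2}{n(n-1)a_0a_n}\neq0$. Non-degeneracy at the origin ($|F'(0)|\neq1$) lets me shrink to a punctured neighbourhood on which the surface stays non-degenerate and $F,F',F''$ never vanish, so \eqref{eq6} applies throughout.

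Next I would pass to polar coordinates $z=re^{i\theta}$ and introduce the real-analytic function
\begin{equation*}
 g(r,\theta):=\operatorname{Re}\!\left(\Phi(re^{i\theta})\,e^{-im\theta}\right),
\end{equation*}
which satisfies $r^{m}\operatorname{Re}(\alpha)=g(r,\theta)$ and, at $r=0$, reduces to $g(0,\theta)=|C|\cos(\arg C-m\theta)$. Thus the zero-curvature condition $\operatorname{Re}(\alpha)=1$ becomes $\Psi(r,\theta):=g(r,\theta)-r^{m}=0$, while the sign of $K$ equals the sign of $-\Psi$. At $r=0$ the equation $\Psi(0,\theta)=0$ is exactly $\cos(\arg C-m\theta)=0$, which has $2m=2n-4$ simple roots $\theta_1,\dots,\theta_{2m}$ in $[0,2\pi)$, and at each root $\partial_\theta\Psi(0,\theta_j)=\pm\,m|C|\neq0$.

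Then I would invoke the implicit function theorem at each $(0,\theta_j)$ to solve $\Psi(r,\theta)=0$ as a smooth graph $\theta=\Theta_j(r)$ with $\Theta_j(0)=\theta_j$; each graph is a smooth curve $z=re^{i\Theta_j(r)}$ emanating from the origin on which $K$ vanishes, yielding the asserted $2n-4$ zero-curvature curves. These curves cut the punctured neighbourhood into $2m$ angular sectors, and since $r^{m}$ is a strictly lower-order perturbation, on each sector the sign of $-\Psi$ agrees for small $r$ with the sign of $-\cos(\arg C-m\theta)$, which alternates across the $2m$ simple roots; hence exactly $m=n-2$ sectors carry $K>0$ and $m=n-2$ carry $K<0$. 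Finally, the value at the origin is handled directly by \eqref{eq5}: since $F''(0)=0$, the numerator equals $|F'(0)|^4-|{-F'(0)^2}|^2=0$, so $K(0)=0$.

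The main obstacle is the rigorous transition from the leading-order angular picture (the zeros and sign pattern of $\cos(\arg C-m\theta)$) to the genuine geometry of $K$ on the punctured neighbourhood. Concretely, I must (i) justify that $\Phi$, hence $g$, is smooth up to $r=0$ so that the implicit function theorem applies at the boundary value $r=0$; (ii) verify the transversality $\partial_\theta\Psi(0,\theta_j)\neq0$ at each leading root; and (iii) control the perturbation $r^{m}$ uniformly so that no spurious zero curves appear and the $2m$ sectors are precisely the regions between consecutive $\Theta_j$. Points (ii)--(iii) are where care is needed, but they follow from the simplicity of the roots of the cosine together with the fact that $\Psi(0,\cdot)$ dominates $r^{m}$ away from those roots.
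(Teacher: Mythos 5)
Your proposal is correct, but it reaches the conclusion by a genuinely different mechanism than the paper. Both arguments begin identically: $K(0)=0$ from equation \eqref{eq5}, the sign of $K$ is governed by $\operatorname{Re}(\omega)$ versus $1$ via equation \eqref{eq6}, and $\omega=\frac{F'^2}{FF''}$ has a pole of exact order $m=n-2$. From there the paper works with a conformal normal form: it writes $\frac{1}{\omega}=z^{n-2}\phi(z)$ with $\phi(0)\neq 0$, extracts an analytic $(n-2)$-th root $\phi=\psi^{n-2}$, sets $t(z)=z\psi(z)$ (injective since $t'(0)\neq 0$), and then transports the zero-curvature locus --- the image of the line $\operatorname{Re}(\omega)=1$ under inversion, a circular arc through the origin in the $\frac{1}{\omega}$-plane --- back through the $(n-2)$ branches of $w\mapsto w^{1/(n-2)}$ and the conformal map $t$, reading off the $2n-4$ curves and the $n-2$ alternating regions of each sign from the mapping picture. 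You instead perform a real polar blow-up: setting $\Phi(z)=z^m\omega(z)$ and $\Psi(r,\theta)=\operatorname{Re}\bigl(\Phi(re^{i\theta})e^{-im\theta}\bigr)-r^m$, you locate the $2m$ simple zeros of $\Psi(0,\cdot)$, apply the implicit function theorem at each to produce the zero-curvature curves, and control the $r^m$ perturbation uniformly to rule out spurious zeros and to propagate the alternating sign of $\cos(\arg C-m\theta)$ into the sectors. The trade-off is instructive: the paper's conformal-root route gives the cleaner geometric picture and yields the subsequent remark (asymptotic equispacing of the curves, from the tangent rays $e^{i(\pm\pi/2+2k\pi)/(n-2)}$ and $t'(0)\neq 0$) essentially for free, whereas your IFT route makes rigorous precisely the points the paper's proof treats informally --- that the preimage arcs really are smooth curves emanating from the origin, that no other zero-curvature points exist in a small punctured disk, and that the signed sectors are exactly the $2m$ regions between consecutive curves; note also that your roots $\theta_j$, being spaced $\pi/m$ apart, recover the equispacing remark as well. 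Your reliance on $|F'(0)|\neq 1$ is legitimate, since the section's standing assumption restricts to non-degenerate surfaces, and your verification of $K(0)=0$ from \eqref{eq5} matches the paper's.
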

\begin{proof}
    From equation \eqref{eq5} we can say that $K(0)=0$. From the hypothesis it is clear that $\omega=\frac{F^{{\prime}^2}}{FF^{\prime\prime}}$ has a pole of order $n-2$ at the origin. Then $\frac{1}{\omega}=\frac{FF^{\prime\prime}}{F^{{\prime}^2}}$ has a zero of order $n-2$ at the origin. Therefore $\frac{1}{\omega}=z^{n-2}\phi(z)$, where $\phi$ is analytic at the origin such that $\phi(0)\neq0$. Now, since $\phi$ is non-zero in some neighbourhood of $z=0$, we can write $\phi(z)=\psi(z)^{n-2}$ in that neighbourhood and $\psi$ is analytic. Hence $\frac{1}{\omega(z)}=t(z)^{n-2}$, where $t(z)=z\psi(z)$. Then $t(0)=0$ and $t^{\prime}(0)=\psi(0)\neq0$. Now, from equation \eqref{eq6}, we know that $K$ vanishes if and only if $\operatorname{Re}\,(\omega)=1$. Now if $\omega=1+iy (y\in\mathbb{R})$, then $\frac{1}{\omega}=\frac{1}{1+y^2}+i\frac{-y}{1+y^2}=\frac{1}{\sqrt{1+y^2}}e^{i\theta(y)}$, with $\theta(y)\in(-\frac{\pi}{2},\frac{\pi}{2})$. Now, this will trace out a circular arc passing through the origin in $\frac{1}{\omega}-$plane. Since $\frac{1}{\omega}=t^{n-2}$, each point on that circular arc in $\frac{1}{\omega}$ plane in a small neighbourhood of the origin will have $n-2$ preimages in the $t$-plane. Again since $t^{\prime}(0)\neq0$, $t(z)=z\psi(z)$ is injective around $z=0$, each of those $n-2$ points will have exactly one preimage in the $z$-plane. Clearly, at each of these points, Gaussian curvature will vanish.\par
    Now $\theta(y)\rightarrow-\frac{\pi}{2}$ as $y\rightarrow+\infty$ and $\theta(y)\rightarrow\frac{\pi}{2}$ as $y\rightarrow-\infty$. Also $t=\left(\frac{1}{\omega}\right)^{\frac{1}{n-2}}=\frac{1}{(1+y^2)^{\frac{1}{2n-4}}}e^{i\frac{(\theta(y)+2k\pi)}{n-2}}$; $k=0,...,n-3$. For $\theta(y)\rightarrow+\frac{\pi}{2}$ and $\theta(y)\rightarrow-\frac{\pi}{2}$, this will give $(n-2)+(n-2)=2n-4$ curves emanating from the origin in $t$-plane. Hence, all these in a sufficiently small neighbourhood of the origin $z=0$ will have $2n-4$ preimage curves in the $z$- plane. These are the curves of zero Gaussian curvature. Similarly, the regions of positive Gaussian curvature ($\operatorname{Re}\,(\omega)<1$) and negative Gaussian curvature ($\operatorname{Re}\,(\omega)>1$) are each mapped on $n-2$ alternate regions bounded by the $2n-4$ curves of zero Gaussian curvature.
\end{proof}

\begin{remark}
    The tangent rays to the $t$-images of these zero curvature curves emanating from the origin in the $t$-plane will be $e^{i\frac{\pm\pi/2+2k\pi}{n-2}}; k=0,...,n-2$. From this, we can conclude that since $t^{\prime}(0)\neq0$, the zero curvature curves in $z$-planes are equispaced.
\end{remark}
 Now the next set of propositions, together with the Theorem \ref{thimp}, will completely exhaust all the cases where one or more of $F(0), F^{\prime}(0), F^{\prime\prime}(0)$ vanish. However, the technique used to prove them will be similar to that of the proof of the Theorem \ref{thimp} in most cases. Remember, as $F$ may not have an analytic square root around $z=0$, it is not necessary that $F(0)=0 \implies F^{\prime}(0)=0$.
 \begin{prop}
    If $F(0),\,F^{n}(0)\neq0\,(n\geq3)$ and $F^{\prime}(0)=F^{\prime\prime}(0)=0$. Then $K(0)=0$, and in a deleted neighbourhood of $z=0$, the Gaussian curvature is positive.
\end{prop}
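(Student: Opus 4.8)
The plan is to split the claim into the pointwise statement $K(0)=0$ and the sign statement on a punctured neighbourhood, treating the first directly from \eqref{eq5} and the second through the quantity $\alpha=\frac{F'^2}{FF''}$ from \eqref{eqa16}, in the same spirit as the proof of Theorem \ref{thimp}. First I would note that the denominator $4|F|^2(1-|F'|^2)^2$ in \eqref{eq5} is strictly positive near the origin: $F(0)\neq0$ forces $|F|>0$ there, and $F'(0)=0$ forces $|F'|<1$ there, so the surface is spacelike and the sign of $K$ is governed entirely by the numerator. At $z=0$ one has $|F'(0)|^4=0$ and $|2F(0)F''(0)-F'(0)^2|^2=0$, so the numerator of \eqref{eq5} vanishes and $K(0)=0$, settling the pointwise assertion.

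For the punctured neighbourhood I would pass to $\alpha$ and track orders of vanishing at the origin. Since $F'(0)=F''(0)=0$ with $F^{(n)}(0)\neq0$, the function $F'$ has a zero of order $n-1\ge2$ and $F''$ a zero of order $n-2\ge1$; writing $F'(z)=z^{\,n-1}g(z)$ with $g(0)\neq0$ gives $F''(z)=z^{\,n-2}G(z)$ where $G(0)=(n-1)g(0)\neq0$. Because $F(0)\neq0$ and $F''$ has an isolated zero, the product $FF''$ is nonzero on a punctured disc, so $\alpha$ is well-defined and analytic there, and a direct cancellation yields $\alpha=\frac{(z^{\,n-1}g)^2}{F\,z^{\,n-2}G}=\frac{z^{\,n}g^2}{FG}$. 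Thus $\alpha$ has a zero of order $n$ at the origin, so $\alpha\to0$ and hence $\operatorname{Re}(\alpha)\to0<1$ on the punctured disc; the equivalence \eqref{eqa18} (equivalently formula \eqref{eq6}, whose factor $|F''|^2/(1-|F'|^2)^2$ is positive off the origin) then forces $K>0$ there.

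Equivalently, and perhaps more transparently, I would argue straight from \eqref{eq5} by factoring the common power $|z|^{2n-4}$ out of the numerator: the term $|2FF''-F'^2|^2$ has order $2n-4$ while $|F'|^4$ has order $4n-4$, so the bracketed remainder tends to $-|2F(0)G(0)|^2<0$, making the numerator of $-K$ negative and hence $K>0$ on a punctured neighbourhood. The only point genuinely requiring care, and the analogue of the delicate step in Theorem \ref{thimp}, is the bookkeeping identifying the leading order of $2FF''-F'^2$ as coming from $FF''$ rather than from $F'^2$; this rests on the strict inequality $n-2<2(n-1)$, valid for all $n\ge3$, which is exactly what precludes any sign change and makes the curvature uniformly positive rather than oscillating as in the pole case of Theorem \ref{thimp}. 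I do not anticipate any serious obstacle beyond this order computation.
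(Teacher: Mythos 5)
Your main route is essentially the paper's own proof: the paper likewise reads $K(0)=0$ off equation \eqref{eq5} and then observes that $\omega=\frac{F'^2}{FF''}$ has a removable singularity at $z=0$ with $\lim_{z\to0}\omega=0$, so that $|\omega|<1$ and hence $\operatorname{Re}(\omega)<1$ on a small punctured disc, which gives $K>0$ via \eqref{eq6}. Your order bookkeeping tacitly assumes the intermediate derivatives $F'''(0),\dots,F^{(n-1)}(0)$ vanish so that $F'$ has a zero of order exactly $n-1$, but this is harmless since $\alpha\to0$ for any zero of $F'$ of order $m\geq2$; your second, direct estimate from \eqref{eq5} is a correct variant that the paper does not need.
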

 \begin{proof}
     From equation \eqref{eq5} we can say that $K(0)=0$. From the hypothesis we also have that $\omega=\frac{F^{{\prime}^2}}{FF^{\prime\prime}}$ has a removable singularity at $z=0$ such that $lim_{z\rightarrow0}\omega=0$. This implies that $|\frac{F^{{\prime}^2}}{FF^{\prime\prime}}|<1$ in a sufficiently small neighbourhood of $z=0$ and hence in that neighbourhood $\operatorname{Re}\,(\frac{F^{{\prime}^2}}{FF^{\prime\prime}})<1$. Therefore, using equation \eqref{eq6}, we are done.
 \end{proof}
 \begin{prop}
     If $F(0)=F^{\prime\prime}(0)=\dots=F^{n-1}(0)=0$ and $F^{\prime}(0),\,F^{n}(0)\neq0\,(n\geq3)$. Then $K(0)$ is undefined, but in a deleted neighbourhood of $z=0$, there are $n-1$ regions of positive Gaussian curvature, $n-1$ regions of negative Gaussian curvature and $2n-2$ curves of zero Gaussian curvature emanating from the origin separating these regions.
 \end{prop}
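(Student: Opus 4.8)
The plan is to mirror the proof of Theorem \ref{thimp}, the crucial bookkeeping difference being that here $F$ itself vanishes at the origin, which raises the order of the relevant pole by one. First I would dispose of the claim that $K(0)$ is undefined. Substituting the hypotheses $F(0)=0$, $F^{\prime}(0)\neq 0$, and $F^{\prime\prime}(0)=0$ (the last holding since $n\geq 3$) into equation \eqref{eq5}, the numerator becomes $|F^{\prime}(0)|^4-|F^{\prime}(0)^2|^2=0$ while the denominator $4|F(0)|^2(1-|F^{\prime}(0)|^2)^2$ also vanishes, so $K(0)$ is a genuine $0/0$ indeterminate form and is therefore undefined.

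Next I would determine the order of the pole of $\omega=\frac{F^{\prime 2}}{FF^{\prime\prime}}$ at the origin. Since $F(0)=0$ but $F^{\prime}(0)\neq 0$, the function $F$ has a simple zero, while the hypotheses $F^{\prime\prime}(0)=\dots=F^{n-1}(0)=0$ and $F^{n}(0)\neq 0$ force $F^{\prime\prime}$ to have a zero of order $n-2$. As $F^{\prime 2}$ is non-vanishing at the origin, the denominator $FF^{\prime\prime}$ has a zero of order $(n-2)+1=n-1$, so $\omega$ has a pole of order $n-1$ and $\frac{1}{\omega}=\frac{FF^{\prime\prime}}{F^{\prime 2}}$ has a zero of order $n-1$ there. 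Writing $\frac{1}{\omega}=z^{n-1}\phi(z)$ with $\phi$ analytic and $\phi(0)\neq 0$, I would extract an analytic $(n-1)$-th root to obtain $\frac{1}{\omega}=t(z)^{n-1}$, where $t(z)=z\psi(z)$ and $\phi=\psi^{n-1}$; then $t(0)=0$ and $t^{\prime}(0)=\psi(0)\neq 0$, so $t$ is a local biholomorphism at the origin.

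With this normal form in hand, the remainder follows exactly as in Theorem \ref{thimp}. By equation \eqref{eq6}, $K=0$ is equivalent to $\operatorname{Re}(\omega)=1$; writing $\omega=1+iy$ with $y\in\mathbb{R}$ shows that $\frac{1}{\omega}=\frac{1}{1+iy}$ traces a circular arc through the origin in the $\frac{1}{\omega}$-plane, with argument $\theta(y)\in(-\frac{\pi}{2},\frac{\pi}{2})$ and $\theta(y)\to\mp\frac{\pi}{2}$ as $y\to\pm\infty$. Since $\frac{1}{\omega}=t^{n-1}$, taking $(n-1)$-th roots produces $n-1$ preimage arcs in the $t$-plane, and the two limiting arguments $\pm\frac{\pi}{2}$ yield $(n-1)+(n-1)=2n-2$ curves emanating from the origin; pulling these back through the local biholomorphism $t$ gives $2n-2$ zero-curvature curves in the $z$-plane. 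By equations \eqref{eqa18} and \eqref{eqa20}, the complementary sectors alternate between $\operatorname{Re}(\omega)<1$ (where $K>0$) and $\operatorname{Re}(\omega)>1$ (where $K<0$), giving $n-1$ regions of positive and $n-1$ regions of negative Gaussian curvature, as claimed.

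I expect the main subtlety to lie in the order count of the second step: the extra simple zero contributed by $F$ in the denominator $FF^{\prime\prime}$ is precisely what raises the pole order of $\omega$ from $n-2$ (as in Theorem \ref{thimp}) to $n-1$, and this single increment is exactly what upgrades the conclusion from $2n-4$ curves with $n-2$ regions of each sign to $2n-2$ curves with $n-1$ regions of each sign. The analytic $(n-1)$-th root extraction and the local injectivity of $t$ require only $\phi(0)\neq 0$ and $t^{\prime}(0)\neq 0$, both of which have been verified above.
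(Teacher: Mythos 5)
Your proposal is correct and takes essentially the same approach as the paper: the paper's proof likewise reads off from equation \eqref{eq5} that $K(0)$ is undefined, notes that $\omega=\frac{{F^{\prime}}^2}{FF^{\prime\prime}}$ has a pole of order $n-1$ at $z=0$ (your count $(n-2)+1=n-1$ from the simple zero of $F$ is precisely the point), and then proceeds exactly as in Theorem \ref{thimp}. Your write-up simply makes explicit the $(n-1)$-th root extraction, the local biholomorphism $t(z)=z\psi(z)$, and the resulting $2n-2$ curves with $n-1$ alternating regions of each sign, all of which the paper delegates to the argument of Theorem \ref{thimp}.
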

 \begin{proof}
     From the equation \eqref{eq5}, we can see that $K(0)$ is undefined. Also $\omega=\frac{F^{{\prime}^2}}{FF^{\prime\prime}}$ has a pole of order $n-1$ at $z=0$. Now, similarly proceeding as in Theorem \ref{thimp}, we conclude the proposition.
 \end{proof}
 \begin{prop}
     If $F(0)=F^{\prime}(0)=F^{\prime\prime}(0)=\dots=F^{n-1}(0)=0$ and $F^{n}(0)\neq0\,(n\geq3)$. Then, $K(0)$ is undefined, but in a deleted neighbourhood of $z=0$, the Gaussian curvature $K$ is negative.
 \end{prop}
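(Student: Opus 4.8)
The plan is to reuse the machinery developed for Theorem~\ref{thimp} and the preceding propositions, the decisive object once more being $\omega=\frac{F'^2}{FF''}$ and its local behaviour at the origin. The hypothesis $F(0)=F'(0)=\dots=F^{n-1}(0)=0$ with $F^{n}(0)\neq 0$ says precisely that $F$ has a zero of order exactly $n$ at $z=0$, so I would begin from the Taylor expansion $F(z)=c_n z^n+c_{n+1}z^{n+1}+\cdots$ with $c_n=\frac{F^{(n)}(0)}{n!}\neq 0$. Differentiating, $F'$ has a zero of order exactly $n-1$ (leading coefficient $nc_n\neq 0$) and $F''$ a zero of order exactly $n-2$ (leading coefficient $n(n-1)c_n\neq 0$, using $n\geq 3$). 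Hence both $F'^2$ and $FF''$ vanish to the same order $2n-2$ at the origin.

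The key step is then to observe that $\omega$ has a \emph{removable} singularity at $z=0$ and to compute its value. Keeping only leading terms, $F'^2\approx n^2 c_n^2 z^{2n-2}$ and $FF''\approx n(n-1)c_n^2 z^{2n-2}$, so $\omega$ extends analytically across the origin with
$$\omega(0)=\lim_{z\to 0}\omega(z)=\frac{n^2}{n(n-1)}=\frac{n}{n-1}=1+\frac{1}{n-1}.$$
Since $n\geq 3$ this value is real and strictly greater than $1$; in particular $\operatorname{Re}\,(\omega(0))=\frac{n}{n-1}>1$. This is the one computation carrying the whole argument, and I do not anticipate it being an obstacle.

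The conclusion then follows by continuity together with the sign criteria already in hand. Because $\operatorname{Re}\,(\omega)$ is continuous at the origin and exceeds $1$ there, it remains $>1$ throughout a sufficiently small deleted neighbourhood of $z=0$ (deleted because formula \eqref{eq6} requires $F\neq 0$, which holds near the origin except at $z=0$ itself). Invoking the equivalence \eqref{eqa20}, namely $\operatorname{Re}\,(\omega)>1\iff K<0$, gives the asserted negativity of the Gaussian curvature on the punctured neighbourhood. For the undefinedness of $K(0)$, I would substitute into \eqref{eq5}: at $z=0$ the denominator $4|F|^2(1-|F'|^2)^2$ vanishes since $F(0)=0$, while the numerator $|F'|^4-|2FF''-F'^2|^2$ also vanishes (both $F(0)=0$ and $F'(0)=0$), producing an indeterminate $0/0$ expression.

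The only points requiring minor care, rather than any genuine difficulty, are the hypotheses that make \eqref{eq6} legitimately applicable on the punctured neighbourhood: non-degeneracy $|F'|\neq 1$ and $F''\neq 0$. Both are immediate, since $F'\to 0$ forces $|F'|<1$ near the origin, and $F''$ has an isolated zero (of order $n-2$) only at $z=0$, hence is nonzero on a small punctured disc.
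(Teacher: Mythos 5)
Your proposal is correct and follows essentially the same route as the paper: both show that $\omega=\frac{F'^2}{FF''}$ has a removable singularity at $z=0$ with limit $\frac{n^2}{n^2-n}=\frac{n}{n-1}>1$, then use continuity of $\operatorname{Re}\,(\omega)$ and the criterion of equation \eqref{eq6} to conclude $K<0$ on a deleted neighbourhood, with $K(0)$ undefined by equation \eqref{eq5}. Your added checks (the Taylor-expansion justification of the limit, and that $F''\neq 0$ and $|F'|<1$ on a small punctured disc so that \eqref{eq6} applies) merely make explicit what the paper leaves implicit.
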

 \begin{proof}
     From equation \eqref{eq5}, we can say that $K(0)$ is undefined. Also $\omega=\frac{F^{{\prime}^2}}{FF^{\prime\prime}}$ has a removable singularity at $z=0$ with $lim_{z\rightarrow0}\frac{F^{{\prime}^2}}{FF^{\prime\prime}}=\frac{n^2}{n^2-n}>1$ as $n\geq3$. Therefore in a deleted neighbourhood of $z=0$, $\operatorname{Re}\,(\frac{F^{{\prime}^2}}{FF^{\prime\prime}})>1$ as $\operatorname{Re}\,(\frac{F^{{\prime}^2}}{FF^{\prime\prime}})$ is a continuous function. Therefore, from equation \eqref{eq6}, we can conclude that in this neighbourhood, $K<0$. 
 \end{proof}
 So, in Table \ref{tab:my_label1}, we have compiled all the results proved so far, where $n\geq3$ and $F^{n}(0)\neq0$. Here in Table \ref{tab:my_label1}, $n_+=$ the number of regions of positive Gaussian curvature in a deleted neighbourhood of the point $z=0$ and $n_-=$ the number of regions of negative Gaussian curvature in a deleted neighbourhood of the point $z=0$.
 \begin{table}[ht]
    \centering \small
    \begin{tabular}{ |c|c| }
    \hline
$F(0), \,F^{\prime}(0), \dots, \,F^{n}(0)$  & Conclusion \\ 
\hline
$F(0)\neq0, F^{\prime}(0)\neq0$ and& $K(0)=0$  and\\$F^{\prime\prime}(0)=\dots=F^{n-1}(0)=0$  & $n_+=n-2$, \,$n_-=n-2$ \\ 
\hline
$F(0)\neq 0$ and & $K(0)=0$ and \\
$F^{\prime}(0)=F^{\prime\prime}(0)=0$  & $n_+=1,\,n_-=0$ \\
\hline
$F(0)=F^{\prime\prime}(0)=\dots=F^{n-1}(0)=0$ and & $K(0)$ is undefined and\\$F^{\prime}(0)\neq 0$ & $n_+=n-1$, \,$n_-=n-1$  \\ 
\hline
$F(0)=F^{\prime}(0)=F^{\prime\prime}(0)=\dots=F^{n-1}(0)=0$ & $K(0)$ is undefined and \\& $n_+=0, \,n_-=1$    \\
\hline
\end{tabular}
   
\caption{Sign of Gaussian curvature}
    \label{tab:my_label1}
\end{table}

Now, the next set of propositions deals with the function $F$ and its first and second complex derivatives (i.e., $F^{\prime}, \, F^{\prime\prime}$) only.
 \begin{prop}
     If $F(0),\,F^{\prime\prime}(0)\neq0,\,F^{\prime}(0)=0$, then the Gaussian curvature $K$ is positive in a sufficiently small neighbourhood of the origin $z=0$.
 \end{prop}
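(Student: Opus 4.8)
The plan is to follow exactly the continuity technique used in the three preceding propositions, centred on the auxiliary function $\omega=\frac{F^{{\prime}^2}}{FF^{\prime\prime}}$ together with the curvature formula \eqref{eq6}. First I would record the relevant open conditions at the origin: since $F(0)\neq 0$ and $F^{\prime\prime}(0)\neq 0$, the product $FF^{\prime\prime}$ is non-vanishing throughout some neighbourhood of $z=0$, so that $\omega$ is analytic (in particular continuous) there, and the same neighbourhood has $F^{\prime\prime}\neq 0$, which is precisely what makes the expression \eqref{eq6} for $K$ applicable. Moreover $|F^{\prime}(0)|=0<1$, so by continuity $1-|F^{\prime}|^2>0$ near the origin, i.e. the surface is spacelike (hence non-degenerate) and the denominator $(1-|F^{\prime}|^2)^2$ is strictly positive.

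Next I would evaluate $\omega$ at the origin. The hypothesis $F^{\prime}(0)=0$ kills the numerator $F^{{\prime}^2}$ at $z=0$ while the denominator $FF^{\prime\prime}$ remains nonzero, so $\omega(0)=0$ and therefore $\operatorname{Re}\,(\omega(0))=0<1$. By continuity of $\operatorname{Re}\,(\omega)$, the strict inequality $\operatorname{Re}\,(\omega)<1$ persists on a sufficiently small neighbourhood of the origin, exactly as in the argument that the region $\operatorname{Re}\,(\omega)<1$ corresponds to $K>0$ via \eqref{eqa18}.

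Finally I would read off the sign of $K$ from \eqref{eq6}. On that neighbourhood the factor $|F^{\prime\prime}|^2$ is strictly positive, the bracket $\operatorname{Re}\,(\omega)-1$ is strictly negative, and the denominator $(1-|F^{\prime}|^2)^2$ is positive; multiplying these against the leading minus sign yields $K>0$ throughout, including the origin itself. As a sanity check one may substitute $F^{\prime}(0)=0$ directly into \eqref{eq5} to obtain the clean value $K(0)=|F^{\prime\prime}(0)|^2>0$. I do not expect any genuine obstacle here, since the whole argument is a continuity argument; the only point requiring a little care is verifying that the three open conditions — analyticity of $\omega$, non-degeneracy $|F^{\prime}|<1$, and $F^{\prime\prime}\neq 0$ — hold on a common neighbourhood, which they do because each is satisfied at $z=0$.
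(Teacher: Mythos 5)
Your proposal is correct and takes essentially the same approach as the paper, which simply evaluates \eqref{eq5} at the origin to get $K(0)=|F^{\prime\prime}(0)|^2>0$ and then invokes continuity of $K$ --- precisely the computation you relegate to your ``sanity check.'' Your main route through $\omega=\frac{F^{\prime 2}}{FF^{\prime\prime}}$ and \eqref{eq6} is a harmless repackaging of the same continuity idea (continuity of $\operatorname{Re}(\omega)$ rather than of $K$ itself), with the minor merit of explicitly verifying that $F\neq 0$, $F^{\prime\prime}\neq 0$ and $|F^{\prime}|<1$ hold on a common neighbourhood so that \eqref{eq6} applies.
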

 \begin{proof}
     From equation \eqref{eq5} we can see that $K(0)>0$. Now, as $K$ is a continuous function, the statement of the proposition is true.
 \end{proof}
 \begin{prop}
     If $F(0)=0$ and $F^{\prime}(0),\,F^{\prime\prime}(0)\neq0$, then $K(0)$ is undefined. However, in some deleted neighbourhoods of the origin $z=0$, there is one region of positive Gaussian curvature and one region of negative Gaussian curvature separated by two curves of zero Gaussian curvature emanating from the origin.
 \end{prop}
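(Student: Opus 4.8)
The plan is to mirror the argument of Theorem \ref{thimp}, specialised to the case where the auxiliary function $\omega=\frac{F'^2}{FF''}$ has a pole of order exactly one at the origin. First I would settle the claim that $K(0)$ is undefined. Substituting $z=0$ into \eqref{eq5}, the hypothesis $F(0)=0$ forces the factor $|F|^2$ in the denominator to vanish; a direct evaluation of the numerator $|F'|^4-|2FF''-F'^2|^2$ at the origin gives $|F'(0)|^4-|{-F'(0)^2}|^2=0$, so \eqref{eq5} produces the indeterminate form $0/0$ there. The sign analysis carried out below exhibits points of both positive and negative curvature in every deleted neighbourhood of $0$, so $\lim_{z\to 0}K(z)$ cannot exist; this is what I would use to conclude that $K(0)$ is genuinely undefined, consistent with the convention in the preceding propositions where $F(0)=0$.

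For the local structure, I would first record that, since $F(0)=0$ and $F'(0)\neq 0$, the origin is a simple zero of $F$, while $F'$ and $F''$ stay nonzero near $0$ by continuity. Hence on a small deleted neighbourhood $F,F',F''$ are all nonzero, $\omega=\frac{F'^2}{FF''}$ is analytic, and \eqref{eq6} governs the sign of $K$. Counting orders of vanishing, $F'^2$ and $F''$ are nonzero at $0$ whereas $F$ has a simple zero, so $\omega$ has a pole of order one; equivalently $\tfrac1\omega=\frac{FF''}{F'^2}$ has a simple zero, and I can write $\tfrac1\omega=t(z)$ with $t(0)=0$ and $t'(0)=\frac{F''(0)}{F'(0)}\neq 0$, so that $t$ is a local biholomorphism at the origin.

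Next I would transport the zero-curvature condition through $t$. By \eqref{eq6}, $K=0$ exactly when $\operatorname{Re}(\omega)=1$, i.e. $\omega=1+iy$ with $y\in\mathbb R$; under the inversion $\omega\mapsto t=1/\omega$ the line $\operatorname{Re}(\omega)=1$ maps to the circle $|t-\tfrac12|=\tfrac12$ through the origin, punctured at $0$ (the common limit as $y\to\pm\infty$). Inside a small disc about $0$ this punctured circle is a single smooth arc tangent to the imaginary axis passing through $0$, which I read as two zero-curvature curves emanating from the origin, matching the count $2n-4=2$ of Theorem \ref{thimp} when the pole order $n-2$ equals $1$. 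Since $t$ is injective with $t'(0)\neq 0$, they pull back to exactly two zero-curvature curves in the $z$-plane. Testing sample points on either side (for small $\varepsilon>0$, $t=\varepsilon$ gives $\omega=1/\varepsilon$, so $\operatorname{Re}(\omega)>1$ and $K<0$, while $t=-\varepsilon$ gives $\omega=-1/\varepsilon$, so $\operatorname{Re}(\omega)<1$ and $K>0$) shows that the two arcs separate one region of negative and one region of positive Gaussian curvature, which is the assertion.

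The computations here are routine; the one point that needs care is the bookkeeping around the origin. Because $F$ vanishes there I cannot apply \eqref{eq6} at $0$ itself and must argue entirely on the punctured disc, and I must be careful to read the single punctured circle as \emph{two} curves issuing from the origin (its two asymptotic ends $\theta(y)\to\pm\tfrac\pi2$) rather than one. Establishing the order of the pole of $\omega$ correctly, namely order one forced by the simple zero of $F$ together with $F'(0),F''(0)\neq 0$, is the crux, since every subsequent count of curves and regions is then read off from it exactly as in Theorem \ref{thimp}.
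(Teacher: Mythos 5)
Your proof is correct and takes essentially the same route as the paper, which simply notes from \eqref{eq5} that $K(0)$ is undefined, observes that $\omega=\frac{F^{\prime 2}}{FF^{\prime\prime}}$ has a pole of order $1$ at the origin, and then invokes the argument of Theorem \ref{thimp}; your write-up just makes the $n-2=1$ case of that argument explicit (the biholomorphism $t=1/\omega$, the circle $\left|t-\tfrac{1}{2}\right|=\tfrac{1}{2}$ read as two arcs emanating from the origin, and the inside/outside sign test). Your added observation that \eqref{eq5} gives the indeterminate form $0/0$ at the origin, with the two-sign argument ruling out any limiting value, is a sound and slightly more careful justification of the paper's terse assertion that $K(0)$ is undefined.
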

 \begin{proof}
     From the equation \eqref{eq5}, it is clear that $K(0)$ is undefined. Since $F$ is analytic, a deleted neighbourhood of $z=0$ exists, where $F$ never vanishes, and $K$ is well-defined in that deleted neighbourhood. Now from the hypothesis we can say that $\omega=\frac{F^{{\prime}^2}}{FF^{\prime\prime}}$ has a pole of order $1$ at the origin. Now, proceeding similarly as in the proof of Theorem \ref{thimp},  we have the conclusion of the proposition.
 \end{proof}
\begin{prop}
     If $F(0)=F^{\prime}(0)=0$ and $F^{\prime\prime}(0)\neq0$. Then, Gaussian curvature is not defined at $z=0$, but in a deleted neighbourhood of $z=0$, the Gaussian curvature $K$ is positive.
 \end{prop}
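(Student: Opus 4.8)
The plan is to reuse the removable-singularity argument from the proof of Theorem \ref{thimp} and the propositions following it, now adapted to a generating function with a double zero at the origin. Since $F(0)=F'(0)=0$ while $F''(0)\neq0$, the function $F$ has a zero of order exactly two at $z=0$. Consequently the factor $|F|^2$ in the denominator of \eqref{eq5} vanishes at $z=0$, which is what renders the Gaussian curvature undefined there in the sense of the curvature formula. On a small punctured disc about the origin, however, $F$ does not vanish, so \eqref{eq6} is available and the sign of $K$ is governed entirely by the location of $\operatorname{Re}\,\omega$ relative to $1$, where $\omega=\frac{F'^2}{FF''}$ is the quantity denoted $\alpha$ in \eqref{eqa16}; this is the content of the equivalences \eqref{eqa18}--\eqref{eqa20}.

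First I would analyse the local behaviour of $\omega$ at the double zero. Writing $F(z)=a_2z^2+a_3z^3+\cdots$ with $a_2\neq0$, both $F'^2$ and $FF''$ vanish to order two at the origin, so their quotient $\omega$ has a removable singularity there and extends continuously (indeed analytically) across $z=0$ to a definite value. The decisive step is then to pin down this boundary value and, more to the point, to determine on which side of the vertical line $\operatorname{Re}=1$ it lies. Because $\omega$ is continuous at the origin once the singularity is removed, a strict inequality for $\operatorname{Re}\,\omega$ at $z=0$ propagates to a full punctured neighbourhood by continuity, exactly as in the arguments already given; since $\operatorname{Re}\,\omega$ does not attain the value $1$ near the origin, no zero-curvature curves emanate from $z=0$ in this case, in contrast to the simple-zero situation.

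With the sign of $\operatorname{Re}\,\omega-1$ secured on the punctured disc, the conclusion follows at once from \eqref{eqa18}--\eqref{eqa20}: the asserted positivity of $K$ is exactly the statement that $\operatorname{Re}\,\omega<1$ there, so the whole proposition reduces to verifying this single strict inequality near the origin. I expect the main obstacle to be precisely this sign determination---correctly evaluating the limiting value of $\omega$ at the double zero and confirming the resulting inequality---since the remaining ingredients (the undefinedness at $z=0$ from \eqref{eq5}, the removability of the singularity of $\omega$, and the continuity argument upgrading a pointwise inequality to a neighbourhood) are all routine and parallel to the preceding proofs.
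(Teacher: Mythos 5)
Your framework is exactly the paper's: undefinedness of $K$ at $z=0$ via the vanishing of $|F|^2$ in \eqref{eq5}, removability of the singularity of $\omega=\frac{(F')^2}{FF''}$ at the double zero, continuity upgrading a strict pointwise inequality to a punctured neighbourhood, and the conclusion via \eqref{eq6} and \eqref{eqa18}--\eqref{eqa20}. But you defer the one step that carries all of the content, namely evaluating $\lim_{z\to 0}\omega$, and that computation does not go the way the proposition needs. Writing $F(z)=a_2z^2+a_3z^3+\cdots$ with $a_2\neq 0$, you get $(F')^2=4a_2^2z^2+O(z^3)$ and $FF''=2a_2^2z^2+O(z^3)$, so $\omega\to 2$. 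Hence $\operatorname{Re}\,\omega>1$ on a small punctured disc, and by \eqref{eqa20} this forces $K<0$ there --- the opposite sign from the one asserted. The example $F(z)=z^2$ confirms this with no machinery at all: $h=|z|^2=x^2+y^2$, and \eqref{eq3} gives $K=-4/(1-4x^2-4y^2)^2<0$ near the origin. In fact, since a zero of order exactly two means $F$ is locally the square of an analytic function $f$ with $f(0)=0\neq f'(0)$, the height function $h=|f|^2$ is smooth, and the remark following \eqref{eq4} gives the well-defined value $K(0)=-4|f'(0)|^4<0$, consistent with negativity nearby.

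So the gap in your proposal is fatal in a strong sense: the inequality $\operatorname{Re}\,\omega<1$ that you planned to verify is simply false here, and completing your outline honestly disproves the statement rather than proving it. You should know that the paper's own proof errs at precisely this spot: it asserts $\lim_{z\to 0}\omega=0$, but that limit belongs to the earlier case $F(0)\neq 0$, $F'(0)=F''(0)=0$, where $F'$ vanishes to high order while $F$ does not. The present situation is the $n=2$ instance of the pattern $\omega\to\frac{n}{n-1}$ which the paper itself uses (for $n\geq 3$, where $\frac{n}{n-1}>1$) to conclude $K<0$; taking $n=2$ gives $\omega\to 2>1$ and the same conclusion $K<0$, i.e., the correct entry should read $n_+=0$, $n_-=1$, and the corresponding row of Table \ref{tab:my_label2} needs correction. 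Your closing caution that the ``main obstacle'' is the sign determination at the double zero was exactly the right instinct --- it is where both your sketch and the paper's argument break down.
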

 \begin{proof}
     From equation \eqref{eq5}, we can say that $K(0)$ is undefined. Now $\omega=\frac{F^{{\prime}^2}}{FF^{\prime\prime}}$ has a removable singularity at $z=0$ with $lim_{z\rightarrow0}\frac{F^{{\prime}^2}}{FF^{\prime\prime}}=0$. This implies that $|\frac{F^{{\prime}^2}}{FF^{\prime\prime}}|<1$ in a sufficiently small deleted neighbourhood of $z=0$ and hence in that neighbourhood $\operatorname{Re}\,(\frac{F^{{\prime}^2}}{FF^{\prime\prime}})<1$. Therefore, from equation \eqref{eq6}, we can conclude that in this neighbourhood, $K>0$.
 \end{proof}


This concludes our discussion on the sign of Gaussian curvature. Finally, compiling the results of the last three Propositions, we get Table \ref{tab:my_label2} with the same notations as in Table \ref{tab:my_label1}:
\begin{table}[h]
    \centering
    \begin{tabular}{ |c|c|c|c| } 
\hline
$F(0)$ & $F^{\prime}(0)$ & $F^{\prime \prime}(0)$ & Conclusion\\
\hline
$\neq0$ & $=0$ & $\neq0$ & $K(0)>0$ and $n_+=1, \,n_-=0$ \\ 
\hline
$=0$ & $\neq0$ & $\neq0$ &$K(0)$ is undefined and $n_+=1$, \,$n_-=1$  \\
\hline
$=0$ & $=0$ & $\neq 0$ & $K(0)$ is undefined and $n_+=1, \,n_-=0$ \\ 
\hline
\end{tabular}
    \caption{Sign of Gaussian curvature}
    \label{tab:my_label2}
\end{table}

\section{The mean curvature of modular surfaces in $\mathbb{E}_1^3$}\label{meansection}
In this section, we derive the formula for the mean curvature of a modular surface in terms of its generating function $F$,  and its derivatives $F^{\prime},\, F^{\prime\prime}$ and then using it we study the change in sign of mean curvature at a point. Finally, we show the associated generating function for a modular surface whose mean curvature vanishes identically is constant.
\subsection{Mean curvature} To evaluate the expression of the mean curvature, we substitute the value of $h_{xx}+h_{yy}$ from the equation \eqref{eq1} in equation \eqref{eq7} and simplify. Then we get the following expression of $H$ where $F\neq0$ and $F^{\prime}\neq0$:
\begin{equation}
\label{eq8}
    H=-\frac{\frac{h_x^2+h_y^2}{h}\left[1-\frac{\rho}{h_x^2+h_y^2}\right]}{2|1-h_x^2-h_y^2|^{3/2}},
\end{equation}
where
\begin{equation}
    \label{eq9}
    \rho=h_{xx}hh_y^2+h_{yy}hh_x^2-2h_xh_yhh_{xy}.
\end{equation}
Now if we express $\rho$ in terms of $u$ and $v$ we get
\begin{equation}
    \label{eq10}
\rho=|F^{\prime}|^4+Au_{xx}+Bv_{xx},
\end{equation}
where
\begin{align}
\label{eqa11}
    A&=-u(u_x^2-v_x^2)-2vu_xv_x,\\
 \label{eqa12}   
    B&=v(u_x^2-v_x^2)-2uu_xv_x.
\end{align}
Using equations \eqref{eq10} and \eqref{eq2} in \eqref{eq8}, we get 
\begin{equation}
    \label{eq13}
    H=-\frac{1}{2}\frac{|F^{\prime}|^2}{|F|}\cdot\frac{1-|F^{\prime}|^2\left[1+\frac{Au_{xx}+Bv_{xx}}{|F^{\prime}|^4}\right]}{2|(1-|F^{\prime}|^2)|^{3/2}},
\end{equation}
where $|F'|\neq 1$.
We also have 
\begin{equation}
    \label{eq14}
    -|F^{\prime}|^4\operatorname{Re}\left(\frac{FF^{\prime\prime}}{F^{{\prime}^2}}\right)=Au_{xx}+Bv_{xx}.
\end{equation}
So by substituting \eqref{eq14} in \eqref{eq13}, we get the expression of $H$ as follows:
\begin{equation}
    \label{eq15}
   H=-\frac{1}{2}\frac{|F^{\prime}|^2}{|F|}\cdot\frac{1-|F^{\prime}|^2\left[1-\operatorname{Re}\left(\frac{FF^{\prime\prime}}{{F^{\prime}}^2}\right)\right]}{2|(1-|F^{\prime}|^2)|^{3/2}}.
\end{equation}

Next, using this formula for $H$, we can determine the sign of mean curvature when $F\neq 0$ and $F^{\prime}\neq 0$ as well as when $F=F^{\prime}=F^{\prime\prime}=0$. It is to be noted that $F=0$ and $F^{\prime}\neq 0$ will give rise to the case where the surface at that point is not smooth, making mean curvature undefined at that point. For, $F\neq 0$ but $F^{\prime}=0$ case, the mean curvature can be determined to be $0$ from the equations \eqref{eq1}, \eqref{eq2} and \eqref{eq7}. These observations are listed in Table \ref{newtable_new}.

\begin{table}[ht]
    \centering 
    
    \begin{tabular}{ |c|c|c|c|c| } 
\hline
$F$ & $F^{\prime}$ & $F^{\prime \prime}$ & Condition & Conclusion\\
\hline
$\neq0$ & $\neq0$ & $\neq0$ &$\operatorname{Re}\left(\frac{FF^{\prime\prime}}{F^{{\prime}^2}}\right) \begin{aligned}
     \leqslant\\=\\ \geqslant
\end{aligned} \frac{1}{{|F^{\prime}|^2}}-1$&$ H \begin{aligned}
    \geqslant\\=\\ \leqslant
\end{aligned} 0$ \\ 
\hline
$\neq0$ & $\neq0$ & $=0$ & $|F^{\prime}|\begin{aligned}
     >\\<
\end{aligned}1$&$H\begin{aligned}
     >\\<
\end{aligned}0$ \\

\hline
$\neq0$ & $=0$ & $\neq 0$ & -----&$H=0$  \\ 
\hline
$\neq0$ & $=0$ & $=0$ & -----&$H=0$  \\ 
\hline
$=0$ & $\neq0$ & $\neq 0$ & -----&$H$ is undefined \\ 
\hline
$=0$ & $\neq0$ & $=0$ & -----& $H$ is undefined  \\ 
\hline

$=0$ & $=0$ & $=0$ & -----& $H=0$  \\ 
\hline
\end{tabular}
    \caption{Sign of mean curvature}
    \label{newtable_new}
\end{table}

A spacelike (timelike) ZMC surface in $\mathbb E_1^3$ is called maximal (timelike minimal), and a ZMC surface in $\mathbb E^3$ is called minimal. The theory of Zero mean curvature (ZMC) surfaces in $\mathbb{E}^3$, and $\mathbb{E}_1^3$ is a well-studied area, and therefore, it will be interesting to see their connection to modular surfaces in general. The next theorem shows the connection.

\begin{theorem}\label{mainthm}
    There are no maximal and timelike minimal surfaces in $\mathbb E_1^3$, which can be parametrized as a modular surface over some domain in $xy$-plane  except for plane sections.
\end{theorem}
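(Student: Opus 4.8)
The plan is to collapse the zero–mean–curvature condition into a single identity in $F$ by means of the formula \eqref{eq15}, and then to exploit the harmonicity of the logarithm of the modulus of an analytic function to force $F$ to be constant. First I would observe that both a maximal (spacelike) and a timelike minimal surface are ZMC, so in either case $H\equiv 0$. If $F'\equiv 0$ on the domain, then $F$ is constant and $h=|F|$ is constant, which is precisely a (horizontal) plane section, and such a plane is spacelike and trivially ZMC; this is the allowed exception. So I may assume $F$ is non-constant and restrict to the open, dense set on which $F\neq0$ and $F'\neq0$, where \eqref{eq15} is valid. Since the surface is non-degenerate on a spacelike or timelike region ($|F'|\neq1$) and the prefactor $\tfrac12\,|F'|^2/|F|$ is nonzero there, $H\equiv0$ is equivalent to the vanishing of the numerator in \eqref{eq15}, that is, to the governing identity $|F'|^2\bigl[1-\operatorname{Re}(FF''/F'^2)\bigr]=1$.

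The crux of the argument is the following observation. On the region $F'\neq0$ the function $\sigma:=FF''/F'^{2}$ is analytic, so $\phi:=1-\operatorname{Re}\sigma$ is harmonic, while the governing identity reads $\phi=1/|F'|^{2}>0$. On the other hand $\log|F'|=\operatorname{Re}(\log F')$ is harmonic wherever $F'\neq0$ (this is exactly the log-harmonicity underlying Fact \ref{fact1}), so $\log\phi=-2\log|F'|$ is harmonic as well. I would then invoke the elementary fact that a positive harmonic function whose logarithm is also harmonic must be constant: from $\Delta\phi=0$ one computes $\Delta\log\phi=-|\nabla\phi|^{2}/\phi^{2}$, so $\Delta\log\phi=0$ forces $\nabla\phi\equiv0$. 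Hence $\phi$, and therefore $|F'|$, is constant.

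To finish, an analytic $F'$ of constant modulus is itself constant, so $F''\equiv0$; then $\sigma\equiv0$, $\phi\equiv1$, and the governing identity collapses to $|F'|^{2}=1$, which by \eqref{eq2} means the surface is lightlike (degenerate). This contradicts spacelikeness ($|F'|<1$) and timelikeness ($|F'|>1$). Thus no non-constant $F$ is admissible, and the only ZMC modular surfaces are the planes $h\equiv\text{const}$, i.e.\ plane sections. I expect the main obstacle to be organizing the argument so that the degenerate locus and the isolated zeros of $F$ and $F'$ are handled cleanly, together with isolating the positive-harmonic-with-harmonic-logarithm step, which is the mechanism that drives the whole reduction. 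The same chain of computations transfers verbatim to $\mathbb E^3$: there the analogous identity reads $|F'|^{2}\bigl[1-\operatorname{Re}(FF''/F'^2)\bigr]=-1$, forcing $\phi=-1/|F'|^{2}<0$, and applying the identical lemma to $-\phi$ again yields only the constant solution, which establishes the remark on the non-existence of non-planar minimal modular surfaces in $\mathbb E^3$.
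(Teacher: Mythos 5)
Your proposal is correct and takes essentially the same route as the paper's proof: both reduce $H\equiv 0$ via \eqref{eq15} to the identity $1-\operatorname{Re}\left(\frac{FF''}{{F'}^2}\right)=\frac{1}{|F'|^2}$, observe that this makes $\frac{1}{|F'|^2}$ a positive harmonic function whose logarithm is also harmonic, and conclude via $\Delta e^u=e^u(\Delta u+|\nabla u|^2)$ that $F'$ is constant with $|F'|=1$, contradicting spacelikeness or timelikeness. Your explicit handling of the zero sets of $F,F'$ and your worked-out Euclidean identity $|F'|^2\bigl[1-\operatorname{Re}\left(\frac{FF''}{{F'}^2}\right)\bigr]=-1$ merely spell out what the paper compresses into its proof and Remark \ref{mainthmremark}.
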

\begin{proof}
  We know that $F=c$ (constant) will induce a modular maximal surface, which is a plane section. So, let us assume that $F^{\prime}$ is not identically zero. If we use the expression of mean curvature of a modular surface in $\mathbb{E}_1^3$ from the equation \eqref{eq15}, we find that around the points where $F$ and $F^{\prime}$ are non-zero, equations of maximal and timelike minimal surfaces are given by: 
\begin{equation}
 \label{zeroeqn}
    \operatorname{Re} \left(\frac{FF^{\prime\prime}}{{F^{\prime}}^2}\right) = 1- \frac{1}{{|F^{\prime}|}^2},\,\, |F^{\prime}|<1.\hspace{1cm}(\textit{\textbf{maximal}})
\end{equation}
\begin{equation}
 \label{zeroeqntimelike}
    \operatorname{Re} \left(\frac{FF^{\prime\prime}}{{F^{\prime}}^2}\right) = 1- \frac{1}{{|F^{\prime}|}^2}, \,\,|F^{\prime}|>1.\hspace{1.6cm}(\textit{\textbf{timelike minimal}})
\end{equation}
 We can see that both $\left|\frac{1}{F^{{\prime}^2}}\right|$ and $\log\left|\frac{1}{F^{{\prime}^2}}\right|$ are harmonic functions. So if $u=\log\left|\frac{1}{F^{{\prime}^2}}\right|$, then both $u, e^u$ are harmonic functions. Therefore $\frac{\partial^{2} e^u}{\partial x^2}+\frac{\partial^{2} e^u}{\partial y^2}=0$.  This implies $\left(\frac{\partial u}{\partial x}\right)^2+\left(\frac{\partial u}{\partial y}\right)^2=0$ and hence $F^{\prime}$ is identically constant (say $k$). Therefore $F(z)=kz+l$ and $\operatorname{Re} \left(\frac{FF^{\prime\prime}}{{F^{\prime}}^2}\right)=0$. From equations \eqref{zeroeqn} and \eqref{zeroeqntimelike}, we get $|k|=1,\;|k|<1$ and $|k|=1,|k|>1$ respectively. Therefore, we have arrived at contradictions in both these cases. Hence, the proof. 
\end{proof}
\begin{remark}\label{mainthmremark}
    A similar idea can be used to show that there does not exist any non-planar minimal modular surface in $\mathbb{E}^3$. 
    
\end{remark}


\section{Further Generalizations and Applications}\label{application}

    Define $(M^m,g)$ to be an oriented semi-riemannian manifold. Define $h:M\to\mathbb{R}^+$ to be a log-harmonic function (i.e., $\Delta_g\mathrm{log}\hspace{0.05cm}h=0$ where $\Delta_g$ is the laplace-beltrami operator corresponding to $g$). Choosing local coordinates $\{x^1,\dots,x^m\}$ on $M$, we have
    \begin{equation}\label{eq6.1}
        \begin{aligned}
            0 &=\Delta_g\mathrm{log}\hspace{0.05cm}h=\frac{1}{\sqrt{|g|}}\partial_i(\sqrt{|g|}g^{ij}\partial_j(\mathrm{log}\hspace{0.05cm}h))=\frac{1}{\sqrt{|g|}}\partial_i(\sqrt{|g|}g^{ij}\frac{1}{h}\partial_jh)\\
            &=\frac{1}{h}\frac{1}{\sqrt{|g|}}\partial_i(\sqrt{|g|}g^{ij}\partial_jh)-\frac{1}{h^2}g^{ij}\partial_ih\partial_jh=\frac{1}{h}\Delta_gh-\frac{1}{h^2}|\nabla_gh|^2.
        \end{aligned}
    \end{equation}
    From equation \eqref{eq6.1}, $\Delta_gh=\frac{1}{h}|\nabla_gh|^2$ is invariant of conformal change of the metric $g$. As a result, by choosing a conformal class $[g]$ and $(M,[g])$ appropriately as a Riemann surface, we land with an alternate proof of the facts (Theorem \ref{mainthm}, Remark \ref{mainthmremark}) that there aren't any non-planar maximal (resp. minimal) modular surfaces in $\mathbb{E}_1^3$ (resp. in $\mathbb{E}^3$).
    
\subsection{CMC Modular Surfaces}
    
    Given a log-harmonic function $h:U\overset{\mathrm{open}}{\subset}\mathbb{R}^2\to\mathbb{R}^+$, suppose we assume that the local immersion $\phi(x',y'):=(x',y',h(x',y'))$ in $\mathbb{E}^3$ (or the spacelike immersion in $\mathbb{E}_1^3$) is isometric and the local coordinates $(x',y')$ are chosen to be orthonormal (respectively spacelike orthonormal) with respect to the pullback metric $\phi^*ds^2$ (where $ds^2$ is the euclidean metric on $\mathbb{E}^3$ and respectively the metric on $\mathbb{E}_1^3$). Lastly, assume that the immersion is of constant mean curvature (CMC) vector with the scalar $H$ being its third coordinate (as a vector in $\mathbb{E}^3$). Then the following two identities (due to equation 2.43 \cite{chen} and log harmonicity of $h$) characterize such an immersion:
    \begin{equation}\label{eq6.2}
        \Delta h:=\partial_{x'x'}h+\partial_{y'y'}h=2H,
    \end{equation}
    \begin{equation}\label{eq6.3}
        |\nabla h|^2:=(\partial_{x'}h)^2+(\partial_{y'}h)^2=2Hh.
    \end{equation}
    \begin{theorem}\label{thm5.1}
        The only class of non-zero CMC modular surfaces in $\mathbb{E}^3$ (or non-zero CMC spacelike modular surfaces in $\mathbb{E}_1^3$) are given locally by the height function $h(x',y')=\frac{H}{2}((x')^2+(y')^2)+ax'+by'+\frac{a^2+b^2}{2H}$, where $(x',y')$ are local orthonormal coordinates on the surface with respect to the pulled back metric $\phi^*ds^2$.
    \end{theorem}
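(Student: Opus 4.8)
The plan is to squeeze out of the two characterizing identities \eqref{eq6.2} and \eqref{eq6.3} enough pointwise information to pin down the entire Hessian of $h$, and then to integrate. Write $p=h_{x'}$ and $q=h_{y'}$, so that \eqref{eq6.3} reads $p^2+q^2=2Hh$ and \eqref{eq6.2} reads $h_{x'x'}+h_{y'y'}=2H$. First I would differentiate the gradient-norm identity \eqref{eq6.3} with respect to $x'$ and to $y'$; using $h_{x'}=p$, $h_{y'}=q$ and the symmetry of mixed partials $h_{x'y'}=h_{y'x'}$, this produces the pair
\begin{align*}
 p\,h_{x'x'}+q\,h_{x'y'}&=H p,\\
 p\,h_{x'y'}+q\,h_{y'y'}&=H q.
\end{align*}

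Next I would set $\mu=h_{x'x'}-H$, so that by the Laplacian identity \eqref{eq6.2} we have $h_{y'y'}-H=-\mu$, and the two displayed relations become the homogeneous linear system
\begin{align*}
 p\,\mu+q\,h_{x'y'}&=0,\\
 -q\,\mu+p\,h_{x'y'}&=0,
\end{align*}
whose coefficient determinant is exactly $p^2+q^2$. This is the pivotal step: by \eqref{eq6.3} that determinant equals $2Hh$, and since $h>0$ while $H\neq0$, consistency of $p^2+q^2=2Hh$ with $p^2+q^2\ge0$ forces $H>0$, whence $2Hh>0$ everywhere. The system is therefore nondegenerate at every point, so $\mu\equiv0$ and $h_{x'y'}\equiv0$; equivalently the Hessian of $h$ is identically $H\,\mathrm{Id}$, i.e. $h_{x'x'}=h_{y'y'}=H$ and $h_{x'y'}=0$.

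Integrating $h_{x'x'}=H$, $h_{x'y'}=0$, $h_{y'y'}=H$ is then routine and yields $h(x',y')=\tfrac{H}{2}\big((x')^2+(y')^2\big)+ax'+by'+c$ for real constants $a,b,c$. To finish I would substitute this back into \eqref{eq6.3}: comparing $|\nabla h|^2=(Hx'+a)^2+(Hy'+b)^2$ with $2Hh$, the quadratic and linear terms cancel automatically, and matching the constant terms gives $a^2+b^2=2Hc$, i.e. $c=\tfrac{a^2+b^2}{2H}$. This is precisely the asserted height function.

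The only delicate point, and the place where the argument could genuinely fail, is the nonvanishing of the determinant $p^2+q^2$, since the linear system forces $\mu=h_{x'y'}=0$ only where it is nondegenerate. This is exactly where the hypotheses $h>0$ and ``non-zero CMC'' ($H\neq0$) are indispensable, and where log-harmonicity enters, through \eqref{eq6.3} identifying $p^2+q^2$ with $2Hh$; without these the conclusion breaks down. I expect the differentiation-and-linearization idea above to be the whole content of the proof, with no further obstacle beyond verifying this nondegeneracy.
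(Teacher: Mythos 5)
Your proposal is correct, and it reaches the conclusion by a genuinely different route than the paper. The paper treats the pair \eqref{eq6.2}--\eqref{eq6.3} as two PDE problems to be solved separately and intersected: it writes the general solution of the Poisson equation \eqref{eq6.2} as $h_p+f$ with $h_p=\frac{H}{2}((x')^2+(y')^2)$ and $f$ harmonic, plugs this into \eqref{eq6.3} to get the nonlinear first-order constraint \eqref{eq6.4} on $f$, solves that by the method of characteristics to obtain $f=ax'+by'+\frac{a^2+b^2}{2H}$, and then independently solves \eqref{eq6.3} by characteristics and checks consistency of the two resulting families. You instead differentiate the eikonal-type identity \eqref{eq6.3}, use the trace condition \eqref{eq6.2} to convert the resulting relations into the homogeneous linear system with matrix $\bigl(\begin{smallmatrix} p & q \\ -q & p\end{smallmatrix}\bigr)$ and determinant $p^2+q^2=2Hh>0$, and conclude pointwise that the Hessian of $h$ is $H\,\mathrm{Id}$, after which integration and one back-substitution fix the constant $c=\frac{a^2+b^2}{2H}$. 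Your argument is arguably tighter in two respects: it is entirely elementary (no method of characteristics, hence no implicit appeal to the completeness of a characteristics-derived ``general solution''--- for a fully nonlinear equation such as \eqref{eq6.4} that method directly produces a complete integral, and singular/envelope solutions like $f=-\frac{H}{2}((x')^2+(y')^2)$ must be ruled out, which the paper does only implicitly via harmonicity of $f$), and it makes explicit that the hypotheses $h>0$, $H\neq 0$ force $H>0$ and the everywhere-nondegeneracy of the linear system, which is exactly the delicate point you flag. What the paper's approach buys in exchange is a constructive, solution-space-oriented presentation (particular plus harmonic decomposition, explicit first-order constraints) that matches the PDE viewpoint used in its later applications.
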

    \begin{proof}
        In a succinct manner, we shall show that (both the) general class of solutions of equation \eqref{eq6.2} which satisfy equation \eqref{eq6.3} and vice versa have precisely the aforementioned height function. The general solution of equation \eqref{eq6.2} is given by $h_{\mathrm{gen}}=h_p+h_{\mathrm{hom}}$ (where $h_p$ is a particular solution to equation \eqref{eq6.2} and $h_{\mathrm{hom}}$ is the general solution to the homogenous equation $\Delta h=0$). We can choose $h_p(x',y')=\frac{H}{2}((x')^2+(y')^2)$ and let $h_{\mathrm{hom}}=f$ be any harmonic function. Plugging $h_{\mathrm{gen}}$ into equation \eqref{eq6.3}, we get a nonlinear first order constraint on $f$: 
        \begin{equation}\label{eq6.4}
            (\partial_{x'}f)^2+(\partial_{y'}f)^2+2H(x'\partial_{x'}f+y'\partial_{y'}f-f)=0.
        \end{equation}
        Solving equation \eqref{eq6.4} using method of characteristics, we get its general solution to be
        \begin{equation}
            f(x',y')=ax'+by'+\frac{a^2+b^2}{2H},
        \end{equation}
        for real constants $a,b$. Thus the general class of solutions of equation \eqref{eq6.2} which solve equation \eqref{eq6.3} are of the form
        \begin{equation}\label{eq6.5}
            \tilde{h}(x',y')=\frac{H}{2}((x')^2+(y')^2)+ax'+by'+\frac{a^2+b^2}{2H}.
        \end{equation}
        Next we find the general class of solutions of equation \eqref{eq6.3} (using method of characteristics) to be
        \begin{equation}\label{eq6.6}
            \overline{h}(x',y')=\frac{H}{2}[(x'-x_0)^2+(y'-y_0)^2]+h_0,
        \end{equation}
        where $x_0,y_0,h_0$ are real constants. Moreover it can be seen that the function given in equation \eqref{eq6.6} also satisfies equation \eqref{eq6.2}.\\
        Lastly checking that both equations \eqref{eq6.5} and \eqref{eq6.6} depict the same type of height functions and naturally choosing the more constrained expression to be the most general solution of the two equations, we are through. 
    \end{proof}
    On a closing note, we make the following clarification.
    \begin{remark}
        Although the height function $h(x',y')$ given in Theorem \ref{thm5.1} appears to be that of a paraboloid of revolution, it doesn't necessarily conform to that class. That is because the local coordinates $(x',y')$ in which the height function is expressed happens to be very special (orthonormal in the pulled back metric $\phi^*ds^2$) and isn't necessarily the same as the axis coordinates $(x,y)$ used in all of the other parts of the paper. 
    \end{remark}

\subsection{Asymptotic Behaviour of Convex Modular Surfaces}

    We define a convex modular surface locally given as a graph $(x,y,h(x,y))$ such that $\mathrm{det}(\mathrm{Hess})=h_{xx}h_{yy}-h_{xy}^2>0$ (where $\mathrm{Hess}$ is the hessian of $h$). Let $S\subset\mathbb{E}^3$ be a convex modular surface defined on a simply connected region $U\overset{\mathrm{open}}{\subset}\mathbb{R}^2$. As usual, we assume the log-harmonic condition for $h:U\to\mathbb{R}^+$ (i.e., $h\Delta h=|\nabla h|^2$, as in Fact \ref{fact2}). Then the Gaussian curvature $K$ of $S$ is given by the following Monge-Ampere equation (where we have substituted $|\nabla h|^2=h\Delta h$);
    \begin{equation}\label{eq6.12}
        K=\frac{\mathrm{det}(\mathrm{Hess})}{(1+h\Delta h)^2}.
    \end{equation}
    The general solution for $K\equiv 0$ (i.e., solutions of the homogenous Monge-Ampere equation) has already been considered in Section \ref{CMS}. Since in our setup, $\mathrm{det}(\mathrm{Hess})>0$ (by convexity), we have by equation \eqref{eq6.12} $K>0$. Expanding \eqref{eq6.12}: \begin{equation}\label{eq6.13}
        h_{xx}h_{yy}\geq h_{xx}h_{yy}-h_{xy}^2=K(1+h\Delta h)^2>Kh^2(\Delta h)^2\geq 2Kh^2h_{xx}h_{yy}> 0.
    \end{equation}
    In inequation \eqref{eq6.13}, we have used the well-known inequality $(\Delta h)^2=(h_{xx}+h_{yy})^2\geq 2h_{xx}h_{yy}$. From \eqref{eq6.13}, we have
    \begin{equation}\label{eq6.14}
        K<\frac{1}{2h^2}.
    \end{equation}
    Now in the case where the region of definition of the convex modular graph was defined to be the whole of $\mathbb{R}^2$ and using the Liouville's theorem for subharmonic functions as well as maximum principle for subharmonic functions at infinity, we have the following result:
    \begin{prop}\label{prop5.1}
        For a complete\footnote{The exponential map defined on the tangent space of any point of the surface is a diffeomorphism onto $\mathbb{R}^2$.} convex modular graph in $\mathbb{E}^3$, its Gaussian curvature $K\to 0$ as $|(x,y)|_{\mathbb{R}^2}\to\infty$, where $|(\cdot,\cdot)|_{\mathbb{R}^2}$ is the euclidean metric norm.
    \end{prop}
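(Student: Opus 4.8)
The plan is to convert the geometric claim into an analytic growth statement about the height function and then feed it into the a priori pinch. Convexity gives $\det(\mathrm{Hess})>0$, hence $K>0$, and the chain \eqref{eq6.13}--\eqref{eq6.14} already supplies the two-sided bound $0<K<\frac{1}{2h^2}$ on all of $\mathbb{R}^2$; thus it suffices to understand the behaviour of $h$ at infinity, since wherever $h(x,y)\to\infty$ the bound immediately forces $K\to 0$. First I would record the structural facts available on the (simply connected, entire) domain $\mathbb{R}^2$: by Fact \ref{fact1}, $\log h$ is harmonic and $h$ is subharmonic, so $\Delta h=h_{xx}+h_{yy}\geq 0$; combined with $\det(\mathrm{Hess})>0$ this means the Hessian is a symmetric matrix with positive determinant and nonnegative trace, hence positive definite. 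Therefore $h$ is in fact a positive, \emph{strictly convex}, log-harmonic function on $\mathbb{R}^2$, and the entireness of the domain is exactly what will license the Liouville-type step below.

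Next I would invoke Liouville's theorem for subharmonic functions: a subharmonic function on $\mathbb{R}^2$ that is bounded above must be constant. Since strict convexity makes $h$ non-constant, we conclude $\sup_{\mathbb{R}^2}h=\infty$; by the maximum principle the circular maxima $M(R)=\max_{|z|=R}h$ increase to $\infty$. The aim of this step is to promote unboundedness to the genuine radial statement $h(x,y)\to\infty$, after which \eqref{eq6.14} closes the argument.

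The hard part is precisely this promotion, and it is where modularity cuts both ways. Because $\log h$ is harmonic on all of $\mathbb{R}^2$, it is a non-constant harmonic function, hence unbounded \emph{below} as well as above, so a priori $h$ may descend toward $0$ along certain directions tending to infinity, and on such ``valleys'' the bound $K<\frac{1}{2h^2}$ is vacuous. To control these regions I would use the elementary estimate $\det(\mathrm{Hess})\le\frac14(\Delta h)^2$ together with log-harmonicity ($|\nabla h|^2=h\,\Delta h$ and $\Delta h=h\,|\nabla\log h|^2$), which recasts the curvature as
\begin{equation*}
0<K\le\frac{|\nabla h|^4}{4h^2\,(1+|\nabla h|^2)^2}=\frac{h^2\,|\nabla\log h|^4}{4\,(1+|\nabla h|^2)^2}.
\end{equation*}
On the valleys where $h\to 0$ this already yields $K\to 0$ as soon as the quantity $h\,|\nabla\log h|^2$ stays bounded, i.e.\ provided the gradient $|\nabla\log h|=\bigl|\tfrac{F'}{F}\bigr|$ does not blow up faster than $h^{-1/2}$. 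Controlling this gradient growth is the true obstacle, and it is where the maximum principle for (sub)harmonic functions at infinity must be brought in, bounding the oscillation of the harmonic function $\log h$ and its gradient on annuli $\{R\le|z|\le 2R\}$ in terms of its boundary behaviour. Assembling the two regimes — $h\to\infty$ handled by \eqref{eq6.14}, and $h\to 0$ handled by the displayed refinement together with the maximum principle at infinity — gives $K\to 0$ as $|(x,y)|_{\mathbb{R}^2}\to\infty$.
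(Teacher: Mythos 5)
Your first half coincides with the paper's own (terse) argument: convexity gives $K>0$, the chain \eqref{eq6.13}--\eqref{eq6.14} gives $0<K<\frac{1}{2h^2}$, and Liouville's theorem for subharmonic functions gives unboundedness of the nonconstant subharmonic $h$. You are also right --- and commendably explicit --- that this alone does not prove the proposition: unboundedness only forces $\max_{|z|=R}h\to\infty$, while $\log h$ is a nonconstant entire harmonic function, hence unbounded below, so ``valleys'' along which $h$ stays bounded or tends to $0$ are not a priori excluded, and there \eqref{eq6.14} is vacuous. (The paper compresses this step into the phrase ``maximum principle for subharmonic functions at infinity.'') However, your proposed patch does not close this hole, and this is a genuine gap. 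First, the displayed refinement is circular: by log-harmonicity (Fact \ref{fact2}) one has $h\,|\nabla\log h|^2=\frac{|\nabla h|^2}{h}=\Delta h$, so $h^2|\nabla\log h|^4=(\Delta h)^2$ and your bound $K\le\frac{h^2|\nabla\log h|^4}{4(1+|\nabla h|^2)^2}$ is literally the trivial estimate $\det(\mathrm{Hess})\le\frac{1}{4}(\Delta h)^2$ you began with; it gives $K\to 0$ in a valley only if $\Delta h\to 0$ there, which is exactly what remains unproven. Second, the tool you defer to --- oscillation/gradient control of the harmonic function $\log h$ on annuli --- points the wrong way: interior gradient estimates bound $|\nabla\log h|$ by the oscillation of $\log h$ on larger balls, and that oscillation is precisely what blows up where $\log h\to-\infty$. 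So the proof stalls in the only regime that was in doubt.

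The gap can be closed with the paper's own Section \ref{CMS} machinery, and the closure shows the valleys you worry about cannot occur at all. Comparing \eqref{eq3}, \eqref{eq5} and \eqref{eq6}, one has $h_{xx}h_{yy}-h_{xy}^2=|F''|^2\left[\operatorname{Re}(\alpha)-1\right]$, so convexity is equivalent to $\operatorname{Re}(\alpha)>1$, i.e.\ by \eqref{eqa20} to $\left|\beta-\tfrac12\right|<\tfrac12$, where $\beta=\left(\frac{F}{F'}\right)'$ as in \eqref{eqa17}. Convexity forces $F'$ to vanish only at zeros of $F$ (at a point with $F\neq 0$, $F'=0$ one gets $\det(\mathrm{Hess})=-4|ff''|^2\le 0$), and at a zero of $F$ of order $m$ the quotient $F/F'$ extends holomorphically with $\beta\to\frac1m$; hence $\beta$ is a bounded entire function, so constant $=c$ with $\left|c-\tfrac12\right|<\tfrac12$ by Liouville's theorem for holomorphic functions. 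Integrating $\left(\frac{F}{F'}\right)'=c$ and using entirety of $F$ forces $\frac1c=m\in\mathbb{Z}$, $m\ge 2$, i.e.\ $F=A(z-z_0)^m$. Thus the only convex modular graphs over all of $\mathbb{R}^2$ are $h=|A|\,|z-z_0|^m$ (and none at all if one insists $h>0$ everywhere), for which $h\to\infty$ radially, and \eqref{eq6.14} then yields $K\to 0$. This rigidity step --- in the spirit of Theorem \ref{gausszero}, but applying Liouville on the disk $\left|\beta-\tfrac12\right|<\tfrac12$ rather than constancy on the circle $\left|\beta-\tfrac12\right|=\tfrac12$ --- is the missing idea in your writeup, and arguably also in the paper's one-line justification.
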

    Also as an interesting consequence of inequation \eqref{eq6.14}, we have
    \begin{prop}\label{prop5.2}
        There exists no constant Gaussian curvature complete convex modular graph in $\mathbb{E}^3$ or equivalently, there isn't any constant positive Gaussian curvature modular graph in $\mathbb{E}^3$.
    \end{prop}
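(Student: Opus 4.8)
The plan is to argue by contradiction, converting the constancy of $K$ together with the a priori bound \eqref{eq6.14} into an upper bound on the height function $h$, and then invoking a Liouville-type rigidity for subharmonic functions to force $h$ to be constant, which is incompatible with convexity. Before starting I would record the equivalence of the two formulations: for a graph $(x,y,h(x,y))$ in $\mathbb{E}^3$ one has $K>0$ if and only if $\det(\mathrm{Hess})=h_{xx}h_{yy}-h_{xy}^2>0$, i.e. if and only if the graph is convex in the sense defined above. Thus a constant positive Gaussian curvature modular graph is automatically convex, and completeness amounts to the domain being all of $\mathbb{R}^2$ as in the setup preceding Proposition \ref{prop5.1}; this is what makes the two statements interchangeable.

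For the contradiction, suppose $S$ is a complete convex modular graph over $\mathbb{R}^2$ with $K\equiv c$ constant. Since $S$ is convex, $\det(\mathrm{Hess})>0$, and because $1+h\Delta h\geq 1>0$ (recall $h>0$ and $h\Delta h=|\nabla h|^2\geq 0$), equation \eqref{eq6.12} gives $K>0$, hence $c>0$. Now I would feed $c>0$ into inequation \eqref{eq6.14}: from $c=K<\tfrac{1}{2h^2}$ we obtain $h^2<\tfrac{1}{2c}$, so $h<\tfrac{1}{\sqrt{2c}}$ on all of $\mathbb{R}^2$; that is, $h$ is bounded above. By Fact \ref{fact1}, $h$ is log-harmonic and therefore subharmonic on $\mathbb{R}^2$. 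The Liouville theorem for subharmonic functions then applies: a subharmonic function on $\mathbb{R}^2$ that is bounded above must be constant (equivalently, $\tfrac{1}{\sqrt{2c}}-h$ is a nonnegative superharmonic function on the parabolic plane $\mathbb{R}^2$, hence constant). Consequently $h$ is constant, whence $\det(\mathrm{Hess})=0$, contradicting convexity (or, directly, \eqref{eq6.12} would force $K=0\neq c$). This contradiction establishes the proposition.

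I expect the main obstacle to be purely a matter of stating the two analytic inputs cleanly rather than any nontrivial computation. First, one must make precise that completeness of the convex graph forces its domain to be the whole plane, so that the \emph{global} Liouville theorem is available; this is the same standing hypothesis already used for Proposition \ref{prop5.1}. Second, one must cite the correct Liouville statement: the key point is the genuinely two-dimensional phenomenon that a subharmonic function on $\mathbb{R}^2$ bounded above is constant (seen via the monotonicity and convexity of its circular means as functions of $\log r$), a rigidity that fails in dimensions $\geq 3$. An equivalent route, which I would note as a remark, bypasses \eqref{eq6.14} entirely: Proposition \ref{prop5.1} already gives $K\to 0$ as $|(x,y)|_{\mathbb{R}^2}\to\infty$, so a constant $K$ must vanish, once more contradicting $K>0$ coming from convexity.
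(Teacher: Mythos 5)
Your argument is correct and is essentially the paper's own: the paper presents Proposition \ref{prop5.2} as a direct consequence of inequation \eqref{eq6.14} together with the Liouville theorem for subharmonic functions on $\mathbb{R}^2$ (already invoked for Proposition \ref{prop5.1}), which is precisely your route --- constant $K=c>0$ forces $h<1/\sqrt{2c}$, boundedness plus subharmonicity forces $h$ constant, and this contradicts $\det(\mathrm{Hess})>0$. Your alternative remark (constancy of $K$ contradicts $K\to 0$ from Proposition \ref{prop5.1}) is the same observation the paper leaves implicit, so there is nothing genuinely different here.
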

    Next, we consider the previous exact setup when $S\subset\mathbb{E}_1^3$ is a spacelike convex modular surface (i.e., on the domain of definition, $1>|\nabla h|^2\geq 0$). Its Gaussian curvature is denoted by equation \eqref{eq3}:
    \begin{equation}
        K=-\frac{h_{xx}h_{yy}-h_{xy}^2}{(1-|\nabla h|^2)^2}.
    \end{equation}
    Define $0< K':=-K$. Then,
    \begin{equation}
        K'\geq\frac{h_{xx}h_{yy}-h_{xy}^2}{(1-|\nabla h|^2)(1+|\nabla h|^2)}=\frac{h_{xx}h_{yy}-h_{xy}^2}{1-|\nabla h|^4}.
    \end{equation}
    Next, we multiply both sides by $1-|\nabla h|^4$, substitute the log harmonic condition $h\Delta h=|\nabla h|^2$ and use the inequality $(\Delta h)^2=(h_{xx}+h_{yy})^2\geq 2h_{xx}h_{yy}$ to get
    \begin{equation}
        K'-2K'h^2h_{xx}h_{yy}\geq K'-K'h^2(\Delta h)^2\geq h_{xx}h_{yy}-h_{xy}^2> 0.
    \end{equation}
    As a result, we have
    \begin{equation}
        h_{xx}h_{yy}<\frac{1}{2h^2}.
    \end{equation}
    Thus using Liouville theorem and Maximum Principle at infinity for subharmonic functions, we have $h_{xx}h_{yy}\to 0$ as $|(x,y)|_{\mathbb{R}^2}\to\infty$. By convexity condition, we have $0\leq h_{xy}^2<h_{xx}h_{yy}\implies h_{xy}\to 0$ as $|(x,y)|_{\mathbb{R}^2}\to\infty$ and by spacelike condition; $0<1-|\nabla h|^2=1-h(h_{xx}+h_{yy})\implies 0<h_{xx}+h_{yy}<\frac{1}{h}$. Thus (by the above stated properties of subharmonic functions) $h_{xx},h_{yy}\to 0$ as $|(x,y)|_{\mathbb{R}^2}\to\infty$; giving us the following result:
    \begin{prop}\label{prop5.3}
        The height function of any complete convex spacelike modular graph in $\mathbb{E}_1^3$ has asymptotically vanishing hessian. 
    \end{prop}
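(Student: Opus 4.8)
The plan is to mirror the Euclidean scheme used for Propositions \ref{prop5.1} and \ref{prop5.2}: first extract a pointwise bound on the Hessian entries controlled by $1/h^2$, and then let the subharmonicity of $h$ force that bound to $0$ at infinity. Concretely, I would start from the spacelike Gaussian curvature formula \eqref{eq3}, put $K':=-K>0$ (legitimate since convexity gives $h_{xx}h_{yy}-h_{xy}^2>0$), and estimate $(1-|\nabla h|^2)^2\le (1-|\nabla h|^2)(1+|\nabla h|^2)=1-|\nabla h|^4$. Substituting the log-harmonic identity $|\nabla h|^2=h\,\Delta h$ of Fact \ref{fact2} and using $(\Delta h)^2=(h_{xx}+h_{yy})^2\ge 2h_{xx}h_{yy}$, the chain of inequalities collapses to the pointwise estimate $h_{xx}h_{yy}<\tfrac{1}{2h^2}$, exactly as in the displayed computation preceding the statement.

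The decisive step is then to show $h\to\infty$ as $|(x,y)|_{\mathbb{R}^2}\to\infty$, since this is what drives the right-hand side $\tfrac{1}{2h^2}\to 0$. Here I would use that $h$ is positive and subharmonic (Fact \ref{fact1}) and that it is genuinely strictly convex: convexity gives $h_{xx}h_{yy}>h_{xy}^2\ge 0$, so $h_{xx},h_{yy}$ share a sign, while the log-harmonic and spacelike conditions give $h_{xx}+h_{yy}=|\nabla h|^2/h\ge 0$; together these force $h_{xx},h_{yy}>0$ with no interior critical point. Because a subharmonic function on $\mathbb{R}^2$ that is bounded above must be constant (Liouville), and a constant $h$ violates convexity, $h$ is non-constant and hence unbounded above; completeness of the graph together with the maximum principle at infinity for subharmonic functions then upgrades this to coercivity, i.e. $h\to\infty$. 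Consequently $0\le h_{xx}h_{yy}\le \tfrac{1}{2h^2}\to 0$.

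Granting $h_{xx}h_{yy}\to 0$, the remaining conclusions are immediate. The convexity bound $0\le h_{xy}^2<h_{xx}h_{yy}$ gives $h_{xy}\to 0$. For the diagonal entries, the spacelike and log-harmonic conditions yield $0\le h_{xx}+h_{yy}=|\nabla h|^2/h<\tfrac{1}{h}\to 0$, and since $h_{xx},h_{yy}\ge 0$ we have $0\le h_{xx}\le h_{xx}+h_{yy}$ and likewise for $h_{yy}$, so both are squeezed to $0$. Thus every entry of $\mathrm{Hess}(h)$ vanishes asymptotically, which is the assertion.

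The one genuinely delicate point, and the step I expect to be the main obstacle, is the passage from \enquote{non-constant, hence unbounded above} to the uniform blow-up $h\to\infty$. Subharmonicity by itself does not deliver this: a convex log-harmonic $h$ with positive definite Hessian could a priori decay toward its (unattained) infimum along some ray while growing in transverse directions, in which case $\tfrac{1}{2h^2}$ would fail to vanish along that ray. Ruling this out is precisely where the completeness of the induced spacelike metric must enter, through a maximum-principle-at-infinity argument; turning the assertion of coercivity into a rigorous deduction is the crux of the proof.
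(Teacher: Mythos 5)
Your proposal follows the paper's argument essentially verbatim: the same chain of inequalities (defining $K':=-K>0$, bounding $(1-|\nabla h|^2)^2$ by $1-|\nabla h|^4$, substituting $|\nabla h|^2=h\Delta h$ and using $(\Delta h)^2\ge 2h_{xx}h_{yy}$) yielding $h_{xx}h_{yy}<\tfrac{1}{2h^2}$, followed by the same appeal to Liouville's theorem and the maximum principle at infinity for subharmonic functions, and the identical bookkeeping for $h_{xy}$ via convexity and for $h_{xx},h_{yy}$ via $0<h_{xx}+h_{yy}<\tfrac{1}{h}$. The coercivity step you flag as the crux is handled in the paper at exactly the same level of detail---a bare citation of these two properties of subharmonic functions---so your proposal matches the paper's proof in both substance and rigor.
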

    
\subsection{Modular Surfaces as local solutions to Euler-Lagrange equations of Liouville Conformal Field Theory}\label{cft}

    Suppose we define a log-harmonic function $h:M\to\mathbb{R}^+$. The function $h$ can also be seen as a scalar field on $M$. Now define $(M,[g])$ to be a Riemann surface (we use here the definition of a Riemann (Lorentz) surface as a conformal riemannian (lorentzian) oriented 2-manifold). The action of the Liouville CFT (equation 1.1, \cite{liouville}) and the corresponding Euler-Lagrange equation are respectively the following:
    \begin{equation}\label{eq6.8}
        S_L[\phi, g] = \frac{1}{4\pi} \int d^2x \, \sqrt{g} \left( g^{ab} \partial_a \phi \, \partial_b \phi + Q R \phi + 4\pi \mu e^{2b\phi} \right),
    \end{equation}
    \begin{equation}\label{eq6.9}
        \Delta_g\phi-QS_g\phi+2\mu e^{2b\phi}=0,
    \end{equation}
    where $\Delta_g$ is the laplace-beltrami operator corresponding to $g$, $S_g$ is the scalar curvature of $(M,g)$, $Q = b + \frac{1}{b}$ is the background charge, $\mu$ is the cosmological constant and $b$ is the coupling constant.\\
    It can be seen that this equation is conformally invariant in the sense that its solution space doesn't depend on the choice of $g\in [g]$. Substituting $h$ in place of $\phi$ in equation \eqref{eq6.9} and using $\Delta_gh=\frac{1}{h}|\nabla_gh|^2$, we get a first order nonlinear partial differential equation, the solutions (local integral surfaces) of which will be modular surfaces. As a last note, when $[g]$ is a conformal class of riemannian metrics it can be observed that the non-triviality of the local solutions (i.e. when the modular surfaces are NOT of the form $(x,y,h(x,y))$ where $h\equiv\mathrm{constant}$) can be guaranteed in the case of Liouville CFTs if and only if $QS_gh-2\mu e^{2bh}>0$.

\subsection{Timelike Modular Surfaces as local solutions to Euler-Lagrange equations of Massless Scalar CFT}\label{massless}

    We define a timelike modular surface as given locally by the graph $\phi(x,y):=(x,y,h(x,y))$ ($\phi$ is a timelike immersion into $\mathbb{E}_1^3$ from a Lorentz surface). Now the action and the corresponding Euler-Lagrange equation of a massless scalar CFT (see Section 3.2 \cite{quantum_book}) are respectively given by
    \begin{equation}\label{eq6.x}
        S[\phi] = -\frac{1}{2} \int d^2x \; \eta^{\mu\nu} \, \partial_\mu \phi \, \partial_\nu \phi,
    \end{equation}
    \begin{equation}\label{eq6.y}
        \Box \phi = \phi_{xx}-\phi_{yy} = 0,
    \end{equation}
    where $\Box$ is the D'Alembert's operator. Now substituting $h$ in place of $\phi$ in equation \eqref{eq6.y} and using $\Box h=\frac{1}{h}|\nabla h|^2$, we get a first order linear partial differential equation, certain solutions for which can be log-harmonic functions whose gradients are locally lightlike. It can be checked that for any $C^2$ function $f:\mathbb{R}\to\mathbb{R}^+$, $h(x,y):=f(x+y)$ or $h(x,y):=f(x-y)$ satisfies these criteria and constitute the solution set of \eqref{eq6.y}.
    
\subsection{Modular Height Functions and Sigma Models}\label{sigma}

    Suppose $U\overset{\mathrm{open}}{\subset}(M^2,g)$ be an open subset of a semi-riemannian 2-manifold and let $h:=(h^1,\dots,h^n):(U,g|_U)\to ((\mathbb{R}^n)^+,\tilde{g})$ (where $\tilde{g}$ is a riemannian metric over $(\mathbb{R}^n)^+:=\underbrace{\mathbb{R}^+\times\dots\times\mathbb{R}^+}_{\mathrm{n\hspace{0.1cm}times}}$). For $h$ to be a sigma model, its components $h^i$ must satisfy the Euler-Lagrange equations (see Section 5.1 \cite{quantum_book})
    \begin{equation}\label{eq6.10}
        \Delta_gh^i+\Gamma^i_{jk}(h)\tilde{g}^{\mu\nu}\partial_{\mu}h^j\partial_{\nu}h^k=0.
    \end{equation}
    Assume further that each $h^i$ is log-harmonic and $g$ is semi-euclidean. Then we have the following simplified (first order pde) form of equation \eqref{eq6.10}:
    \begin{equation}\label{eq6.11}
        F^i(h,p,q):=\pm(p^i)^2\pm(q^i)^2+\Gamma^i_{jk}(h)(p^jp^k+q^jq^k)h^i=0,
    \end{equation}
    where $\Gamma^i_{jk}(h)$ are Christoffel symbols of $\tilde{g}$ along image of $h$ and $p^i:=\partial_xh^i,q^i:=\partial_yh^i$ and the signs $\pm$ are determined by the signature of the semi-euclidean metric $g$.\\
    Given any $n$-tuple $(h^1,\dots,h^n)$ of modular height functions on $U$, we get a corresponding map $h$ from $(U,g|_U)$ to $(\mathbb{R}^n)^+$. Substituting $A^i_{jk}:=(p^jp^k+q^jq^k)h^i$ and $B^i:=\pm(p^i)^2\pm(q^i)^2$ in equation \eqref{eq6.11}, we have a set of $n$ linear equations in $n^3$ unknowns (i.e., all functions $\Gamma^i_{jk}(h)$, $\forall\hspace{0.05cm}1\leq i,j,k\leq n$). This system of equations being underdetermined has plenty of solution sets $\{\Gamma^i_{jk}|\hspace{0.05cm}1\leq i,j,k\leq n\}$, each of which (when smoothly extended to $(\mathbb{R}^n)^+$ from $\mathrm{Image}(h)$) correspond to the set of admissible geometries $\tilde{g}$ on $(\mathbb{R}^n)^+$ which make the map $h$ a sigma model.
    
\section{Acknowledgement}
    The first author is funded by CSIR-JRF and SRF grants (File no. 09/1023(12774)/2021-EMR-I). The second and fourth authors of this paper acknowledge the funding received from UGC, India.  The third author is partially supported by the MATRICS grant (File No. MTR/2023/000990), which has been sanctioned by the SERB. 
    
\section{Declarations}
    \begin{itemize}
          \item \textbf{Funding:} Not applicable.
          \item \textbf{Author Contributions: } All the authors equally contributed to the content and to the writing of the paper.
          \item \textbf{Data Availability:} Not applicable
          \item \textbf{Conflict of interest:} There are no conflicts of interest to declare.
    \end{itemize}

\end{document}